\def\dimx{\nu }
\def\div{\text{\rm div}}
\def\cal \mathcal{ }
\def\nst2{\| _*} 
\def\a12{A_h ^{1/2} } 
\def\d{{\mathrm d}}
\def\tr|{|\! |\! |}
\def\C{{\mathbb C}}
\def\R {{\mathbb R}}
\def\L{{\mathscr{L}}}
\def \a{\alpha }
\def\T_h{{{\mathcal T}_h}}
\def\<{{\langle }}
\def\>{{\rangle }}
\newcommand{\qp}[1]{\ensuremath{\!\left({#1}\right)}}
 \newcommand{\norm}[1]{\ensuremath{\left|#1\right|}}
  \newcommand{\Norm}[1]{\ensuremath{\left\|#1\right\|}}
  \renewcommand{\vec}[1]{\boldsymbol{#1}}
\def\normVB#1{\|#1\|_{\mathscr {V}_{BC}}}
\def\Hnod1{\stackrel{\!\scriptscriptstyle 0}{H^1}\!\!}
\def\ubar{\overline{u}}
\def\vbar{\overline{v}}
\def\Bc{{\mathcal B}}
\def\Hc{{\mathscr H}}
\def\Vc{{\mathscr V}}
\def\Vcbc{{\mathscr V}_{BC}}
\DeclareSymbolFont{matha}{OML}{txmi}{m}{it}
\DeclareMathSymbol{\varv}{\mathbf}{matha}{118}
\def\L{{\mathscr{L}}}
\newtheorem{assumption}{Assumption}[section]
\newcommand\restr[2]{{
  \left.\kern-\nulldelimiterspace 
  #1 
  \vphantom{\big|} 
  \right|_{#2} 
  }}
\NewDocumentCommand{\dgal}{sO{}m}{%
  \IfBooleanTF{#1}
    {\dgalext{#3}}
    {\dgalx[#2]{#3}}%
}
\NewDocumentCommand{\dgalext}{m}{%
  \sbox0{%
    \mathsurround=0pt 
    $\left\{\vphantom{#1}\right.\kern-\nulldelimiterspace$%
  }%
  \sbox2{\{}%
  \ifdim\ht0=\ht2
    \{\kern-.625\wd2 \{#1\}\kern-.625\wd2 \}%
  \else
    \left\{\kern-.7\wd0\left\{#1\right\}\kern-.7\wd0\right\}%
  \fi
}
\NewDocumentCommand{\dgalx}{om}{%
  \sbox0{\mathsurround=0pt$#1\{$}%
  \sbox2{\{}%
  \ifdim\ht0=\ht2
    \{\kern-.625\wd2 \{#2\}\kern-.625\wd2 \}%
  \else
    \mathopen{#1\{\kern-.7\wd0 #1\{}
    #2
    \mathclose{#1\}\kern-.7\wd0 #1\}}
  \fi
}
\renewcommand{\R}{{\mathbb{R}}}
\renewcommand{\T}{\mathbb{T}^d}
\renewcommand{\@makefntext}[1]{\noindent#1} 
\begin{document}

\title
{ Variational Principles for the Helmholtz equation: application to  Finite Element and Neural Network approximations} 

\author{ \name{\begin{center}{\sc  G. Makrakis $^{1,2}$, C.\  Makridakis $^{2, 3},$ D.\ Mitsoudis$^{ 2, 4},$ \\ M.\ Plexousakis $^{1,2}$   \and T.\ Pryer  $^{5 }$    }\end{center}} }
\footnote  {$^{1}$ DMAM, University of Crete, Greece \\
 $^{2 }$ IACM-FORTH, Greece\\
         $^{3 }$ MPS, University of Sussex, United Kingdom\\
         $^{4 }$ DNA, University of West Attica, Greece\\
         $^{5 }$ DMS and Institute of Mathematical Innovation, University of Bath,  United Kingdom}

\editor{}
\maketitle

\abstract{
    In this paper, we investigate whether variational principles can be
  associated with the Helmholtz equation subject to impedance
  (absorbing) boundary conditions. This model has been extensively
  studied in the literature from both mathematical and computational
  perspectives. It is classical with wide applications, yet accurate
  approximation at high wavenumbers remains challenging.

  We address the question of whether there exist energy functionals
  with a clear physical interpretation whose stationary points,
  the zeros of their first variation, correspond to solutions of the
  Helmholtz problem. Starting from Hamilton's principle for the wave
  equation, we derive time-harmonic energies. The resulting
  functionals are generally indefinite. As a next step, we
  construct strongly coercive augmentations of these indefinite
  functionals that preserve their physical interpretation. Finally, we
  show how these variational principles lead to practical numerical
  methods based on finite element spaces and neural network
  architectures.
}

\section{Introduction}\label{Se:1}

\subsubsection*{Helmholtz equation and Variational Principles}\label{Se:1NN}
 {The Helmholtz equation with an impedance 
boundary condition }
\begin{equation}\label{Helm_PDE}
  -\Delta u - k^2 u = f\quad \text{ in }  \varOmega 
  \qquad \partial_{\mathbf{n}} u = i k  u \quad \text{ on }  \partial\varOmega
\end{equation}
 {is the time-harmonic analogue of the wave equation with absorbing boundary conditions of first order. Here, the function $u$ is  complex-valued, 
 $u:\varOmega\to\mathbb{C}$, $ \varOmega\subset\mathbb{R}^{\dimx}$ is a bounded Lipschitz domain, $f\in L^2(\varOmega)$ is a complex-valued function and $k>0$ is the wavenumber.}
{We use the compact notation $\L u:=-\Delta u - k^2 u$.}
 {The use of various absorbing boundary conditions is motivated by the need to reduce wave propagation problems posed on large or unbounded domains to problems defined on computationally tractable bounded domains.} 
{While this model is classical and broadly applicable, the design of numerical algorithms capable of accurately approximating it at high wavenumbers remains a significant challenge.}

In this work, we investigate whether \emph{variational principles} can be associated with the Helmholtz equation and in particular with \eqref{Helm_PDE}.   {Specifically, we ask whether there exist energies motivated from first principles  whose \emph{stationary points}, the zeros of their first variation, coincide with solutions of \eqref{Helm_PDE}. }
To the best of our knowledge, such principles have not been systematically  {developed for impedance Helmholtz problems such as \eqref{Helm_PDE}.}

 {Starting from Hamilton's principle for the wave equation, we derive time-harmonic energies that are, in general, \emph{indefinite}. These energies apply naturally to the Helmholtz equation posed on unrestricted domains. When the domain is truncated and impedance boundary conditions are imposed, suitable modifications  become necessary. 
In unrestricted domains, the corresponding Lagrangian functionals may be chosen to be real-valued. However, to account for the influence of the artificial boundary introduced by the absorbing condition, we employ complex-valued Lagrangians. Nevertheless, the variational structure is preserved, since the stationarity condition has a unique critical point corresponding to the solution of \eqref{Helm_PDE}.}

 {We then construct strongly coercive augmentations of these energies. These regularised Lagrangians are carefully designed modifications of the original indefinite functionals and provide stability bounds in the case of star-shaped domains.}

 {For all formulations considered, we address existence and uniqueness questions for the corresponding variational problems by employing the complex version of the Lax-Milgram theorem. Throughout, our derivations and proposed formulations are motivated by the goal of remaining as close as possible to the underlying physical properties of the problem.}

Apart from its theoretical appeal, the derivation of new variational principles has significant practical implications. Modern methods that utilise neural-network-based discrete spaces typically rely on discrete minimisation problems, where the loss functional is constructed either by approximating a continuous energy associated with the problem or by using the $L^2$ residual of the differential operator \cite{Karniadakis:pinn:orig:2019}. While the residual-based approach is straightforward, it  {is desirable
to employ alternative methods, especially}
for PDEs with {highly oscillatory} behaviour {such as \eqref{Helm_PDE}}.  
  { The hope is that, by employing a variational principle with strong coercivity, one obtains a more robust foundation for efficient and reliable methods. In fact, }{we show that the proposed variational principle  {may lead to} an effective neural-network formulation for \eqref{Helm_PDE}. We also present prototype numerical evidence that the same principle leads to efficient finite element formulations and outline how its properties motivate discretisations with improved characteristics.} Strongly coercive bilinear forms for the Helmholtz equation were previously developed in \cite{Moiola_Spence2014}, leading to finite element methods that, in general, produce non-symmetric matrices. While we employ similar analytical tools, namely the Rellich and Morawetz identities, which are effective in the analysis of \eqref{Helm_PDE}, our motivation and design strategy differ substantially, see Remark \ref{remark_MS} and Section \ref{sec:design}.

\subsubsection*{Bibliography and Main Contributions}
The Helmholtz equation has been extensively investigated from multiple perspectives, including the treatment of various boundary conditions and alternative formulations; see, for example
\cite{MorawetzLudwig1968, 
FixMarin1978,
Goldstein1982,
BGT1985,
Aziz_et_al_1988,
KellerGivoli1989,
Givoli1992,
Douglas_et_al_1993,
perthame1999morrey,
Yafaev2000,
SingerTurkel2004,
CummingsFeng2006,
Mitsoudis2007,
Athanassoulis_et_al_2008,
chandler2008wave,
MitsoudisPlexousakis2009,
MMP2012,
SauterTorres2018,
GROTE_control_H_2019,
GrahamSauter2020}
{and references therein}.
Frequency-explicit stability estimates are important because, among other reasons, when combined with finite element error bounds, they highlight the constraints imposed by moderate to high {wavenumbers} and {mesh resolution}. These analyses reveal subtle correlations between the wavenumber and the mesh size, an aspect first systematically investigated within the research {programme} initiated by I. Babu\v ska and his collaborators (see, e.g., \cite{IhlenburgBabuska1995, Ihlenburg1998, BabuskaSauter2000}, and the references therein).
Stability analyses based on energy methods employing nonstandard test functions were developed in {\cite{Makridakis_et_al_1996} and \cite{Melenk1995}}. 
These test functions belong to a broader class of so-called Morawetz multipliers, introduced by Morawetz and Ludwig \cite{MorawetzLudwig1968} for the justification of geometric optics for the reduced wave equations. Such multipliers are instrumental in deriving Rellich-type identities, as used, for instance, in \cite{CummingsFeng2006, Spence_Chandler-Wilde_integral_2011, Moiola_Spence2014} and related works.
{The multipliers $\mathcal{M}$ typically involve $x\cdot\nabla u$ together with possible lower-order terms; the identities obtained by testing the PDE with $\mathcal{M}u$ are known as Morawetz identities.}
These identities play a crucial role in establishing stability estimates for a wide range of problems in both linear and nonlinear wave propagation, \cite{Morawetz_PIrish_1972,
Lesky2003,
MetcalfeEtal2005,
Tao_2006,
DAncona_2012,
Hintz_Zworski_2017,
Tao_MI_notices,
cossetti2024virial,
Bigniardi_Moiola_wave_2025}.
Such identities will also be instrumental in the design and analysis of the variational principles introduced in this paper. 

 {The main contributions of this paper are as follows. We derive variational principles for the Helmholtz equation \eqref{Helm_PDE} from first principles, starting from Hamilton's principle (least action) and a Lagrangian formulation of the wave equation to motivate the associated energy functionals. As expected, these functionals are generally indefinite. Incorporating impedance boundary conditions presents an additional challenge, which we address by employing complex-valued Lagrangians. We then construct regularised versions of these functionals that are strongly coercive.} 
Our main idea is to perturb the physical Lagrangian by adding a term of the form 
$\gamma \| \L u-f \|^2$ and a corresponding boundary term. This regularisation enhances the stability of the variational principle, though in a nontrivial way. It turns out that, by {choosing $\gamma$ suitably}, one can obtain a coercive functional with respect to an appropriately strengthened norm. The resulting coercivity arises from a combination of the original physical Lagrangian and suitable Morawetz identities, which are employed to derive lower bounds first for the bulk energy terms and subsequently for the boundary contributions.
 {The resulting energies form the basis for well-posed variational principles and minimisation problems whose unique minimisers approximate the solutions of the Helmholtz equation.} 
  
 {Finally, we show how these variational principles yield discrete approximation schemes. We present 
  computational methods based on classical conforming finite elements and neural-network discretisations. The primary focus is the mathematical analysis underpinning the design of the new variational principles, but we also include prototype formulations for both finite elements and neural networks to illustrate their potential for computing approximations to \eqref{Helm_PDE}.    It is worth emphasising that neural network approximations depend on appropriate minimisation principles, and, to our knowledge, this is a new coercive augmented variational formulation for impedance Helmholtz problems that can be used with neural trial spaces. Preliminary results suggest several desirable properties, a detailed numerical investigation is deferred to future work. }
Regarding the extensive literature on finite element schemes for approximating the Helmholtz equation, as mentioned above, analyses combining frequency-explicit stability bounds for weak solutions provide both theoretical and computational insights into the behaviour of such methods and the pollution effects that become particularly significant at high {wavenumbers} \cite{BabuskaSauter2000}. Classical error estimates and quasi-optimality bounds based on inf-sup conditions trace back to ideas introduced by Aziz et al. \cite{Aziz_et_al_1988}, Makridakis et al. \cite{Makridakis_et_al_1996} and Melenk \cite{Melenk1995}, which build upon the approach of Schatz \cite{Schatz1974} for indefinite problems.
Further detailed studies of the pollution effect and the improved performance of higher-order methods can be found in \cite{Sauter_refined, MelenkSauter2010}.  {Least squares methods were considered in \cite{First-orderLS_Helm_Lee_et_al_2000}, methods employing basis functions that solve local problems and thus achieve enhanced performance are discussed in \cite{HipMP_serveyT_2016}.} Discontinuous Galerkin methods were investigated in \cite{Feng_dg_2011}, while specially designed algorithms that avoid the pollution effect were proposed in \cite{Peterseim_polloutionHelm_2017}. Extensions to problems with variable coefficients have also been considered, for example, in \cite{GrahamSauter2020}.

The paper is organised as follows. {Section \ref{sec:energy} recalls the physical derivation of the wave equation via Hamilton's principle and motivates   natural Lagrangian functionals for the Helmholtz equation. Section \ref{sec:variational} introduces notation and functional-analytic preliminaries and defines the variational formulations and the regularised Lagrangians, and Section \ref{sec:proofs} proves the main results. Section \ref{sec:numerics} discusses discrete approximations based on neural networks and conforming finite elements and illustrates their formulation within the variational framework.

\section{Energy principles}\label{sec:energy}
\subsubsection*{The basic model: Transverse vibrations of a string}

In this section, we explain how the wave and the Helmholtz equations can be derived from Hamilton's principle. Such a derivation provides us with a natural Lagrangian, which in turn arises in the standard variational reformulation of certain boundary value problems for the Helmholtz equation. For simplicity, we restrict the discussion to the case $\dimx = 1$, and we consider the transverse motion of a string occupying the interval {$\Omega=(\alpha,\beta)$} along the $x$-axis. Analogous results can be derived for $\dimx = 2$ or $\dimx = 3$, corresponding to the transverse motion of a membrane or to sound waves in an ideal compressible fluid, respectively.
We assume that the mass density (per unit length) of the string is $\rho(x)$ and that the string is subjected to tension by opposite forces of strength $S$ at its ends. Then, the transverse deflection $U(x,t)$ satisfies the wave equation
\begin{equation} \label{eq:wave-nh}
\frac{1}{c^2(x)}\frac{\partial^2 U}{\partial t^2}=\frac{\partial^2 U}{\partial x^2}  {- \frac{F(x,t)}{S}}, \ \  t>0\ , \ \  \alpha < x< \beta,
\end{equation}
where $c(x)= \left(S/\rho(x)\right)^{1/2}$ is the phase velocity of the waves propagating along the string, and  {$F(x,t)$ denotes an external transverse load per unit length (the sign convention in \eqref{eq:wave-nh} is immaterial for what follows)}.  {We assume that $c(x)$ and $F(x,t)$ are sufficiently smooth functions.}

At the end points $x=\alpha,\beta$,  it is necessary to impose boundary conditions, usually in the form of linear combination of $U(x,t)$ and $\partial U/\partial x (x,t)$,  for all $t>0$, and also we must prescribe initial data for $U(x,0)$ and $\partial U/\partial t(x,0)$, for all $\alpha < x< \beta$, in order   {to be able to construct} a unique solution of the problem.

In many applications, as well as for fundamental theoretical reasons in scattering theory, it is necessary to study the corresponding time-harmonic problem, and, in particular, the associated Green's function. To this end, we consider that the motion of the string starts from rest at time $t = 0$,  and it subsequently evolves  {under the action} of a transverse force distribution which varies periodically in time with frequency $\omega$. Such a force is represented by the distribution $F(x,t)=\Re\bigl(e^{-i\omega t} f(x)\bigr) H(t)$, where, typically, $f(x)$ is a localised function and $H(t)$ is the  {Heaviside function}.  {If the Limiting Amplitude Principle applies, then as $t \to \infty$ the motion becomes time-harmonic of the form} $U(x,t) = \Re\bigl(u(x) e^{-i\omega t}\bigr)$, where the complex amplitude $u(x)$ satisfies the Helmholtz equation
\begin{equation}\label{eq:helm-nh}
u''(x) + k^2(x) u(x) = \frac{f(x)}{S}, \qquad  k(x) =\frac{\omega}{c(x)},   \quad \alpha < x< \beta \ ,
\end{equation}
and certain boundary conditions dictated by $U(x,t)$ and $\partial U/\partial x (x,t)$ at $x=\alpha,\beta$. Whenever $\alpha=-\infty$ and/or $\beta=+\infty$, the boundary conditions must be replaced by  {outgoing (Sommerfeld-type) radiation conditions} to ensure uniqueness of the solution.

\subsubsection*{Variational derivation of the wave equation}

Let
\begin{equation}
\mathcal{T}(x,t)= \frac12  \rho(x) \Bigl(\frac{\partial U}{\partial t}\Bigr)^2  
\end{equation}
be the \emph{kinetic energy density} (that is, energy per unit length), and
\begin{equation}
\mathcal{V}(x,t)=  {\frac12   S  \Bigl(\frac{\partial U}{\partial x}\Bigr)^2}
\end{equation}
be the \emph{potential energy density} of the string {; the external loading enters through the work term in the Lagrangian rather than as stored potential energy}.
For any time interval $0<t<T$, $T<\infty$, the \emph{action} of the system is given by the space-time integral
\begin{equation}\label{eq:dvar}
\mathcal{I}(U)= \frac{1}{T}\int_{0}^{T}\int_{\alpha}^{\beta}  \mathcal{L}\Bigl(x, {U},\frac{\partial U}{\partial t},\frac{\partial U}{\partial x}\Bigr)  \d x   \d t \ ,
\end{equation}
of the \emph{Lagrangian density}
\begin{equation}\label{eq:lagrdens}
 \begin{split}
   \mathcal{L}\Bigl(x, {U},\frac{\partial U}{\partial t},\frac{\partial U}{\partial x}\Bigr)
   &= \mathcal{T}(x,t)-\mathcal{V}(x,t) { - F(x,t) U(x,t)}
   \\
   &= \frac12  \rho(x) \Bigl(\frac{\partial U}{\partial t}\Bigr)^2
   - \frac12   S  \Bigl(\frac{\partial U}{\partial x}\Bigr)^2  { - F(x,t) U(x,t)} \ .
 \end{split}
\end{equation}
According to \emph{Hamilton's variational principle} \cite{Lanczos_VP_1949,Gelfand_Fomin_CV}, the actual motion of the string follows from the stationarity of the action, i.e., the derivative of the action with respect to the field $U$ in the direction of any appropriate admissible function $\eta=\eta(x,t)$ \cite[Section 8.1]{Evans},
\begin{equation}\label{eq:deul1}
 {D_{U}\, \mathcal{I}{(U)}[\eta]=\frac{d}{d\epsilon}I(U+\epsilon \eta)|_{\epsilon=0}=0} \
\end{equation}

Equation \eqref{eq:deul1} implies
\begin{equation}\label{eq:lagrange}
 {D_{U}\, \mathcal{I}(U)}[\eta]=\frac{1}{T} \,  \int_{0}^{T}\int_{\alpha}^{\beta} \frac{\delta {\mathcal L}}{\delta U}\, \eta \, dx \, dt
+ \frac{1}{T}\,\int_{0}^{T}\int_{\alpha}^{\beta} \, \frac{\partial}{\partial t} \Biggl(\frac{\partial {\mathcal L}}{\partial \Bigl(\frac{\partial U}{\partial t}\Bigr)} \, \eta \Biggr)+
\frac{\partial}{\partial x} \Biggl(\frac{\partial {\mathcal L}}{\partial \Bigl(\frac{\partial U}{\partial x}\Bigr)} \, \eta \Biggr) \, dx \, dt =0\ ,
\end{equation}
where$\frac{\delta {\mathcal L}}{\delta U}$ is the variational derivative
\begin{equation}\label{eq:varder}
\frac{\delta {\mathcal L}}{\delta U}= \, \frac{\partial {\mathcal L}}{\partial U}
-\frac{\partial }{\partial t}\Biggl(  \frac{\partial {\mathcal L}}{\partial \Bigl( \frac{\partial U}{\partial t}\Bigr)}  \Biggr) - \frac{\partial }{\partial x}\Biggl( \frac{\partial {\mathcal L}}{\partial
\Bigl(\frac{\partial U}{\partial x} \Bigr)} \Biggr) \ ,
\end{equation}
For the Lagrangian density \eqref{eq:lagrdens}, equation \eqref{eq:lagrange} gives the wave equation
\begin{equation}\label{eq:wave}
\frac{1}{c^2(x)} \frac{\partial^2 U}{\partial t^2}-\frac{\partial^2 U}{\partial x^2}= {- \frac{F}{S}} \ ,
\end{equation}
assuming that the boundary term
\begin{equation}\label{eq:deul2}
{\mathcal{R}}=\frac{1}{T}\,\int_{0}^{T}\int_{\alpha}^{\beta} \, \Biggl\{ \frac{\partial}{\partial t} \Biggl(\frac{\partial {\mathcal L}}{\partial \Bigl(\frac{\partial U}{\partial t}\Bigr)} \, \eta\Biggr)+
\frac{\partial}{\partial x} \Biggl(\frac{\partial {\mathcal L}}{\partial \Bigl(\frac{\partial U}{\partial x}\Bigr)} \, \eta \Biggr)\Biggr\}\, dx\, dt
\end{equation}
vanishes.

By integrating by parts, the boundary term \eqref{eq:deul2} is written as follows
\begin{eqnarray}\label{eq:remainder1}
{ \mathcal{R}}=\frac{1}{T}\, \int_{\alpha}^{\beta}\rho \Biggl( U_t(x,T)\eta(x,T)-U_t(x,0) \eta(x,0)\Biggr) \, dx \nonumber \\
-S\, \frac{1}{T}\,\int_{0}^{T}\Biggl(U_x(\beta, \tau)\eta(\beta, \tau)-U_x(\alpha, \tau)\eta(\alpha, \tau)\Biggr)\, d\tau  \ .
\end{eqnarray}

If we choose  $\eta$ to have compact support in $G=(\alpha, \beta)\times(0,T)$, then $\mathcal{R}\equiv 0$, and \eqref{eq:wave} holds in the sense of distributions on $G$.
 {Formally, since in this case we do not vary  $U$ on $\partial\, G$, we can handle inhomogeneous initial data, as well as impedance boundary conditions of the form $U_{t}=\pm \mu \, U_{x}$ for some $\mu >0$.
Moreover, if we assume that the admissible function $\eta$ satisfies $\eta(x,0)=\eta(x,T)=0$ for all $\alpha \le x \le \beta$, that is we do not vary $U(x,t)$ at the initial and final time, then  we must impose at $x=\alpha \ , \beta$, the natural Neumann (zero slope) boundary condition $\frac{\partial U}{\partial x}=0$,   to ensure that ${\mathcal{R}}=0$. }

\subsubsection*{Variational derivation of the Helmholtz equation}

The Helmholtz equation can be derived from the time-harmonic version of Hamilton's principle. We choose $T=\frac{2\pi}{\omega}$, the period of the time-harmonic wave, and define the stationary density $\ell$ as
the time average of the time-dependent Lagrangian density $ \mathcal{L}$,
\begin{equation}\label{eq:meanlang}
\ell\Bigl(x,u,\tfrac{\partial u}{\partial x}\Bigr)
:= \frac{2}{T}\int_{0}^{T} \mathcal{L}\Bigl(x, {U},\frac{\partial U}{\partial t},\frac{\partial U}{\partial x}\Bigr)  \d t \ ,
\end{equation}
where we substitute the time-harmonic ansatz $U(x,t)=\Re\bigl(u(x) e^{-i\omega t}\bigr)$, and $F(x,t)=\Re\bigl(f(x) e^{-i\omega t}\bigr)$.

After time integration in \eqref{eq:meanlang}, we derive
\begin{equation}\label{eq:hlagrdens}
\ell\Bigl(x,u,\tfrac{\partial u}{\partial x}\Bigr)
=  {\frac 12 \, S\, \Bigl(k(x)^2 |u|^2- \bigl|\tfrac{\partial u}{\partial x}\bigr|^2\Bigr) -  \Re\bigl(f(x) \overline{u(x)}\bigr),}
\qquad k(x)=\frac{\omega}{c(x)}.
\end{equation}
 {(Any positive constant multiple of $\ell$ produces the same stationary points, but the coefficients in \eqref{eq:hlagrdens} match the time average of \eqref{eq:lagrdens}.)}

The Helmholtz equation is derived from the stationarity of the action
\begin{equation}
\mathcal{J} (u) := \int_{\alpha}^{\beta} \ell\Bigl(x,u,\tfrac{\partial u}{\partial x}\Bigr)  \d x .
\end{equation}
This action arises naturally by rewriting the action $\mathcal{I}$ in \eqref{eq:dvar} in terms of the time average \eqref{eq:meanlang},
\begin{align}\label{eq:dvarh}
\mathcal{I} =  \frac{1}{T}\int_{0}^{T}\int_{\alpha}^{\beta} \mathcal{L}\Bigl(x, {U},\frac{\partial U}{\partial t},\frac{\partial U}{\partial x}\Bigr)  \d x  \d t
=  \frac12 \,\int_{\alpha}^{\beta} \ell\Bigl(x,u,\tfrac{\partial u}{\partial x}\Bigr)  \d x .
\end{align}

The use of the action $\mathcal{J}$ for the variational derivation of the Helmholtz equation is not standard, and it has been inspired by the variational derivation of the Schr\"odinger equation from a real Lagrangian \cite{Mo_book}, since both the  Schr\"odinger wave function and the wave function $u$ in the Helmholtz equation are complex.

The time-harmonic version of Hamilton's principle dictates that
\begin{equation}\label{eq:svar_1}
 {D_{u}\, \mathcal{J}{(u)}[\theta]=\frac{d}{d\epsilon}\mathcal{J}{(u)}(u+\epsilon \theta)|_{\epsilon=0}=0} \ .
\end{equation}
For an admissible function $\theta=\theta(x)$ with compact support in $(\alpha,\beta)$,  this condition implies that the Helmholtz equation \eqref{eq:helm-nh} holds in the distributional sense.
Similarly to the variational derivation of the wave equation, for other admissible functions $\theta(x)$ we can    accommodate various boundary conditions.

  {With variations taken with respect to the real inner product on $H^1((\alpha,\beta);\mathbb{C})$, the Euler-Lagrange equation for \eqref{eq:svar_1} is precisely the Helmholtz equation in the sense of distributions. See Section~\ref{sec:variational} for the detailed computation }

\subsubsection*{Impedance boundary conditions}

The \emph{total energy density} (per unit length) of the string at time $t$ is
\begin{equation}\label{eq:endens}
\mathcal{E}(x,t)= \mathcal{T}(x,t)+\mathcal{V}(x,t)   .
\end{equation}
For $F\equiv 0$, the total energy $E(t)=\int_{\alpha}^{\beta} \mathcal{E}(x,t)  \d x$ is constant for suitable choices of initial and boundary conditions. In fact, by direct calculation and using \eqref{eq:wave-nh},
\begin{equation}\label{eq:dere}
\frac{\d E(t)}{\d t}
= \int_{\alpha}^{\beta}\Biggl( \frac12  \rho(x) \frac{\partial}{\partial t}\Bigl(\frac{\partial U}{\partial t}\Bigr)^2
+ \frac12   S  \frac{\partial}{\partial t}\Bigl(\frac{\partial U}{\partial x}\Bigr)^2 \Biggr)  \d x
= S \Biggl[\frac{\partial U}{\partial x} \frac{\partial U}{\partial t}\Biggr]_{x=\alpha}^{x=\beta}   .
\end{equation}
If, for example, either the deflection $U$ or the slope $\partial U/\partial x$ at the endpoints is zero for all $t>0$, then the last term in \eqref{eq:dere} vanishes.
For an infinite string, where $\alpha=-\infty$ and $\beta=+\infty$, the derivative $\partial U/\partial t$ vanishes at both ends due to the finite speed of propagation.

However, the boundary conditions
\begin{equation}\label{eq:time-imp}
\frac{\partial U}{\partial x}=-\mu \frac{\partial U}{\partial t}\quad \text{at } x=\beta,
\qquad
\frac{\partial U}{\partial x}=+\mu \frac{\partial U}{\partial t}\quad \text{at } x=\alpha,
\end{equation}
where $\mu>0$,
are dissipative, since   by substituting \eqref{eq:time-imp} into \eqref{eq:dere} we get
\[
 {\frac{\d E(t)}{\d t} = -\mu\, S\bigl((\partial_t U)^2(\beta,t)+(\partial_t U)^2(\alpha,t)\bigr)< 0,}
\]
 {unless $\partial_t U(\alpha,t)=\partial_t U(\beta,t)=0$.}
In the sequel we will  take the string tension force $S=1$.

For the time-harmonic problem, the boundary conditions \eqref{eq:time-imp} imply the following dissipative (impedance)  boundary conditions for the Helmholtz equation:
\begin{equation}\label{eq:time-harm-imp}
\frac{\d u}{\d x}= i \sigma_{\beta} k_{\beta} u \quad \text{at } x=\beta,
\qquad
\frac{\d u}{\d x}= - i \sigma_{\alpha} k_{\alpha} u \quad \text{at } x=\alpha,
\end{equation}
where $k_{\alpha}=\omega/c(\alpha)$, $k_{\beta}=\omega/c(\beta)$, and $\sigma_{\alpha}=\mu c(\alpha)$, $\sigma_{\beta}=\mu c(\beta)$.

  {In higher dimensions the Helmholtz equation is stated in a bounded domain  
$\Omega\subset\mathbb{R}^{\dimx}$, where $u$ represents the deflection of a membrane for ${\dimx}=2$, and, e.g., the pressure of a barotropic fluid for ${\dimx}=3$.   In these cases the impedance condition on the boundary has the form
\[
 {\partial_{\mathbf{n}} u} = i k \sigma u \quad \text{on } \partial\Omega,
\]
 with $\mathbf{n}$ the outward unit normal and $\sigma>0$. The canonical choice $\sigma\equiv 1$ yields the boundary condition used in \eqref{Helm_PDE}.
 The impedance boundary conditions cannot be derived from Hamilton's variational principle associated with the real Lagrangian \eqref{eq:hlagrdens}, since real Lagrangians generate conservative systems and therefore do not capture dissipative boundary effects.
}

 {However, this obstruction can be remedied formally by adding a purely imaginary boundary term to the real action
$\mathcal{J}=\int_{\Omega} \ell \d x$
thereby obtaining a complex action. This additional term models the boundary $\partial\Omega$ as an energy-dissipating membrane, see \cite{FR2015}, where boundary conditions are interpreted as physical processes occurring within thin boundary layers.}

 {To proceed, we first make the following observation. We write the Lagrangian $\ell$ in the equivalent form
\begin{equation} \label{eq:mdhlagrdens}
 \ell\Big(x, u, \ubar, \nabla u, \nabla \ubar\Big)
 := \frac12 \Big( \nabla u \cdot \nabla \ubar - k^2 u\ubar \Big)
 - \Re(f\ubar),
\end{equation}
and consider the action $\mathcal{J}=\int_{\Omega} \ell \d x$ as a functional of the complex-valued functions $u$ and $\ubar$ (equivalently, of $\Re u$ and $\Im u$). Following the variational formalism used in the derivation of the Schr\"odinger equation from a real Lagrangian \cite{Mo_book}, we vary $u$ and $\ubar$ independently to derive simultaneously the Helmholtz equation and its complex conjugate.}

 {Then we construct the complex action
\begin{equation}
\mathcal{S}(u\ , \ubar) := \int_{\Omega} \ell\Bigl(x, u, \ubar, \nabla u, \nabla \ubar\Bigr) \d x - \frac{ik}{2}\int_{\partial \Omega}u \ubar \d S \ ,
\end{equation}
and compute the Wirtinger derivative with respect to $\ubar$ in the direction $\vbar$,
\begin{equation}\label{eq:wirtubar}
D_{\ubar}\mathcal{S}(u\ , \ubar) := \frac{d}{d\epsilon}\mathcal{S}(u, \ubar+\epsilon \vbar)\Big|_{\epsilon=0} \ .
\end{equation}
We obtain
\begin{equation}\label{Section2:complexVF}
D_{\ubar}\mathcal{S}(u\ , \ubar)= \frac12 \int_\Omega \Bigl(\nabla u\cdot \nabla \vbar  - k^2 \, u\vbar\Bigr)\, \d x -\frac12 \int_\Omega f \vbar \, \d x
          -\frac{ i\, k}{2}\int_{\partial \Omega} u \vbar\, \d S \, .
\end{equation}}
 {By imposing the stationarity condition $D_{\ubar}\mathcal{S}(u\ , \ubar)=0$ and integrating by parts in the first integral, we recover both the Helmholtz equation for $u$ in $\Omega$ and the impedance boundary condition on $\partial \Omega$. This stationarity condition is equivalent to the weak formulation of the impedance boundary value problem, see \eqref{eq:wf-gen} with $\zeta_\Omega =f$ and  $\eta_{\partial\Omega} =0$ in the next section.
It is important to note that, when we impose the stationarity condition $D_{u}\mathcal{S}(u\ , \ubar)=0$, we recover the Helmholtz equation for $\ubar$ in $\Omega$ together with the conjugate impedance boundary condition
$ {\partial_{\mathbf{n}} \ubar} = -i k \ubar$ on $\partial \Omega$. This distinction arises from the fact that the problem is non-selfadjoint.}

\section{Variational principles for the Helmholtz equation}
\label{sec:variational}

\subsubsection*{Notation and preliminaries}

We introduce some standard notation. {Throughout, $\Omega\subset\mathbb{R}^{\dimx}$ is a bounded Lipschitz domain with outward unit normal $\mathbf{n}$ \cite{Grisvard_book}.} For a domain $D$, we write $(\cdot,\cdot)_D$ for the
$L^2$ inner product on $D$, $\|\cdot\|_D$ for the corresponding 
$L^2$ norm, and $\|\cdot\|_{m,D}$ for the Sobolev norm on $H^m(D)$. When $D=\Omega,$ we omit the subscript $D.$ {On the boundary we use the trace spaces $H^s(\partial\Omega)$, $s\in\{\tfrac32,\tfrac12,-\tfrac12,-\tfrac32\}$, and the trace operator $\tau :H^1(\Omega)\to H^{1/2}(\partial\Omega)$ \cite{Lions_Magenes_I}. When no confusion may arise we will use just $u$ to denote $ \tau u\, .$ }

It will be useful to consider a generalised impedance problem on $\Omega$: seek {a} complex-valued function $w$ such that
\begin{equation}\label{eq:gen-Helm}
- \Delta w - k^2 w = \zeta_\Omega \quad \text{in } \Omega, 
\qquad \partial_{\mathbf{n}} w - i k  w = \eta_{\partial\Omega} \quad \text{on } \partial\Omega,
\end{equation}
where the source terms $\zeta_\Omega$ and $\eta_{\partial\Omega}$ are given. {For the minimal weak setting we take $\zeta_\Omega\in H^{-1}(\Omega)$ and $\eta_{\partial\Omega}\in H^{-1/2}(\partial\Omega)$. In several places below we also work in an $L^2$-based setting, writing $\zeta_\Omega\in L^2(\Omega)$ and $\eta_{\partial\Omega}\in L^2(\partial\Omega)$ when stronger regularity is required by the functionals.}

 {We denote  by $\Hc$ the space  $H^1(\Omega)$ equipped with the wavenumber-dependent norm 
\[
\|v\|_{H^1_k(\Omega)} := \Bigl(\|\nabla v\|_\Omega^2 + k^2 \|v\|_\Omega^2\Bigr)^{1/2}\, .
\]}

{For energies that involve the bulk residual $ \L v:=-\Delta v-k^2 v $ in $L^2(\Omega)$ we use the  space}
\[
\Vc := \Bigl\{  v\in H^1(\Omega) :  \L v \in L^2(\Omega),\ v\in H^1(\partial \Omega),  \text{and}\ \partial_{\mathbf{n}} v - i k  v \in  L^2(\partial \Omega ) 
 \Bigr\},
\]
{endowed with the norm}
\[
\|v\|_{\Vc}^2 := \|\nabla v\|_\Omega^2 + k^2 \|v\|_\Omega^2 + \|\L v\|_\Omega^2  + k ^2 \|v \|_{\partial \Omega} ^2  + \| \nabla v \|_{\partial \Omega} ^2 + \| \partial_{\mathbf{n}} v - i k  v \|_{\partial \Omega} ^2 .
\]
{The homogeneous-impedance subspace is}
\[
\Vcbc := \Bigl\{  v\in \Vc  :  \partial_{\mathbf{n}} v - i k  v = 0 \ \text{in } L^2(\partial\Omega)  \Bigr\},
\]
and in the sequel we shall use the notation
$$
  \normVB{v}^2:=\|\nabla v\|_\Omega^2 + k^2 \|v\|_\Omega^2 + \|\L v\|_\Omega^2 + k ^2 \|v \|_{\partial \Omega} ^2 +  \| \nabla v \|_{\partial \Omega} ^2 .
$$

The regularity and other properties of  \eqref{eq:gen-Helm} can be formulated with the aid of its variational formulation. To this end, consider the sesquilinear form $\Bc$   defined by 
\begin{equation}   \label{eq:bf}
  \Bc(u,v) := \int_\Omega \nabla u\cdot \nabla \vbar \, \d x - k^2 \int_\Omega  u\vbar\, \d x
           - i\, k\int_{\partial \Omega} u \vbar\, \d S \, .
\end{equation}
Given $\zeta_\Omega$ and $\eta_{\partial\Omega}$ as above, the weak problem is:

\emph{Find $w\in \Hc$ such that}
\begin{equation}\label{eq:wf-gen}
\Bc(w,v) = (\zeta_\Omega,v)_\Omega  + {\langle \eta_{\partial\Omega},   v \rangle_{H^{-1/2}, H^{1/2}}}
\qquad \text{for all } v\in \Hc .
\end{equation}
{In the $L^2$-based setting the duality pairing reduces to the $L^2(\partial\Omega)$ inner product.}

{By construction, $\Vc$ is the class of functions for which the data-to-solution relation in \eqref{eq:wf-gen} is meaningful with $\zeta_\Omega\in L^2(\Omega)$ and $\eta_{\partial\Omega}\in L^2(\partial\Omega)$, and $\Vcbc\subset\Vc$ corresponds to the case $\eta_{\partial\Omega}=0$. The solution $u$ of \eqref{Helm_PDE} satisfies \eqref{eq:wf-gen} with $\zeta_\Omega=f$ and $\eta_{\partial\Omega}=0$, hence $u\in\Vcbc$,} see \cite{Moiola_Spence2014} and Remark \ref{Rem3.4}.

\subsubsection*{Variational Principles}
In Section \ref{sec:energy}, we 
show that the physical energy associated with the Helmholtz equation  {in unrestricted domains} is
\begin{equation}\label{Energy_H_main_s2}
\mathcal{E}_P(v) = \int_{\Omega }  \frac12  \Bigl(    |\nabla v |^2 - k^2|v|^2 \Bigr)   \d x  -     \Re \int_{\Omega } f   \overline{v}   \d  x   .
\end{equation}
Then the map  $\mathcal{E}_P  : \Hc \to \R $ is clearly differentiable. 
{We take variations with respect to the \emph{real} inner product on $H^1(\Omega;\mathbb{C})$, i.e. $\langle \phi,\psi\rangle_{\mathbb{R}}:=\Re \int_{\Omega}\phi \overline{\psi} \d x$, so that $D\mathcal{E}_P(u)[v]\in\mathbb{R}$ for all $u,v$.}
To find its derivative we first notice 
\begin{equation}\label{Energy_H_main_Re}
\mathcal{E}_P(v) =  \Re \Bigl\{  \int_{\Omega }  \frac12  \bigl( |\nabla v |^2 - k^2|v|^2 \bigr)  \d x  -  \int_{\Omega } f   \overline{v}   \d  x \Bigr\}. 
\end{equation}
Then, 
\begin{equation}\label{Energy_H_main_Re2}\begin{split}
\< D\mathcal{E}_P(u)  , v  \>   =&  \Re \Bigl\{  \int_{\Omega }  \tfrac12 \Bigl(   \nabla u\cdot \nabla \vbar  
+   \nabla v\cdot \nabla \ubar  - k^2( u \vbar + v \ubar ) \Bigr) \d x  -  \int_{\Omega } f   \overline{v}   \d  x \Bigr\}\\[2pt]
 = &  \Re \Bigl\{  \int_{\Omega }   \bigl( \nabla u\cdot \nabla \vbar  - k^2 u \vbar \bigr)   \d x  -  \int_{\Omega } f   \overline{v}   \d  x \Bigr\}.
\end{split}
\end{equation}
 Consider {  $u= u_R + i u_I $, $v= v_R + i v_I$, and $f = f_R + i f_I$}. 
Then stationary points of $ \mathcal{E}_P, $ i.e., $u\in \Hc $ satisfying,
\begin{equation}\label{Energy_H_main_stat_p}\begin{split}
\< D\mathcal{E}_P(u)  , v  \>  =   0, \quad \text{for all } \ v \in \Hc   , \end{split}
\end{equation}
are such that, for any $v\in \Hc , $
\begin{equation}\label{Energy_H_main_Re_system}
\begin{split}
  &    \int_{\Omega }   \Bigl(     \nabla u _R \cdot \nabla v_R 
 \ - k^2  u_R   v_R \Bigr) \d x  -  \int_{\Omega }{   f_R}    v_R   \d  x   =  0  , \\
 &    \int_{\Omega }   \Bigl(     \nabla u _I \cdot \nabla v_I 
 \ - k^2  u_I   v_I \Bigr) \d x  -   {    \int_{\Omega } f_I    v_I   \d  x }  =  0  . \\ 
\end{split}
\end{equation}
{Since $C_c^\infty(\Omega)\subset \Hc$ is dense, testing \eqref{Energy_H_main_stat_p} with $v\in C_c^\infty(\Omega)$ yields the Euler-Lagrange equation in distributional form.}
Considering test functions $v\in \Hc \cap C ^\infty _ c (\Omega)$ we conclude that stationary points 
of $ \mathcal{E}_P  $  satisfy $- \Delta u - k^2 u = f$ in $ \Omega,$ 
in the sense of distributions.
\begin{proposition}
 Consider the energy $\mathcal{E}_P$ defined in \eqref{Energy_H_main_s2}. The stationary points of 
 $\mathcal{E}_P$ defined on $\Hc$ satisfy the Helmholtz equation in the sense of distributions.
\end{proposition}
{As discussed in Section 2, we cannot incorporate the absorbing boundary conditions into the energy when we restrict the computational domain. One way to view this is to split the domain as $\Omega = \Omega_{\text{Restr}} \cup \Omega_2$. The corresponding energies can then be written as follows, assuming that $f$ has compact support in $\Omega_{\text{Restr}}$:}
\begin{equation}\label{Energy_H_main_s2_2}
\begin{split}
\mathcal{E}_P(v) = &\int_{ \Omega _{\text{Restr}}  }  \frac12  \Bigl(    |\nabla v |^2 - k^2|v|^2 \Bigr)   \d x  -     \Re \int_{\Omega _{\text{Restr}}  } f   \overline{v}   \d  x  +  \int_{\Omega _{2} }  \frac12  \Bigl(    |\nabla v |^2 - k^2|v|^2 \Bigr)   \d x \\
=&: \mathcal{E}_{P, \Omega _{\text{Restr}} }(v) + \mathcal{E}_{P, \Omega _{2}}(v)
\end{split}\end{equation}
An energetically consistent approach requires representing the contribution of the complementary domain, $\mathcal{E}_{P, \Omega _{2}}(v)$,   through a surface integral incorporating appropriate boundary conditions.   As discussed in Section 2, it remains unclear whether impedance conditions can be used to represent this energy contribution. To preserve the variational structure associated with the boundary conditions in (1.1), we therefore leverage complex Lagrangians.

To this end, we  define 
\begin{equation}\label{Energy_H_complex_1}
\begin{split}
\mathcal{E}_{P, \C } (v) = \mathcal{E}_{P, \C } (v,  \overline{v}) =  &\int_{ \Omega    }  \frac12  \Bigl(    |\nabla v |^2 - k^2|v|^2 \Bigr)   \d x  -     \Re \int_{\Omega   } f   \overline{v}   \d  x   - \frac{ik}{2}\int_{\partial \Omega}|v|^2 \d x  \\
 =  &\int_{ \Omega    }  \frac12  \Bigl(    \nabla v\cdot \nabla \vbar  - k^2v \vbar \Bigr)   \d x  -     \Re \int_{\Omega   } f   \overline{v}   \d  x   - \frac{ik}{2}\int_{\partial \Omega}v \vbar \d x  
\end{split}\end{equation}

The calculations leading to \eqref{Section2:complexVF} show the following

\begin{proposition}
 Consider the Lagrangian $\mathcal{E}_{P, \C }$ given by \eqref{Energy_H_complex_1} and defined on $\Hc$.
If $ \Bc(\cdot ,\cdot) $ is the  form defined in \eqref{eq:bf}, the stationary points of $\mathcal{E}_{P, \C }$ with respect to $\vbar,$  
 satisfy, 
 \begin{equation*}   \label{eq:bf_new}
  \Bc(u,v) = \int_{\Omega   } f   \overline{v}   \d  x, \quad \text{for all } v\in \Hc\, , 
\end{equation*}
 i.e., they are weak solutions of    
 Helmholtz problem \eqref{Helm_PDE}.   
\end{proposition}

\begin{remark} [Relation to the approach of Courant and Hilbert]
For the variational treatment of general boundary conditions (for example, real Robin boundary conditions for the Laplacian on a bounded domain) Courant and Hilbert \cite{CH1953}, Sec. 5 (see also \cite{C1943}, Sec. 2), introduced a real boundary Lagrangian density so that the unwanted boundary term arising from integration by parts yields the desired boundary condition. In light of this observation, it is natural to introduce a complex boundary Lagrangian density to derive impedance and other complex boundary conditions.
\end{remark}

\begin{remark} [Other boundary conditions]
It would be interesting to extend the above derivation to other classes of boundary conditions, as well as to alternative approaches for reducing the computational domain, such as perfectly matched layers (PML). Furthermore, alternative methods based on approximating $\mathcal{E}_{P, \Omega _{2}}(v)$ in \eqref{Energy_H_main_s2_2} may also be explored.  
\end{remark}

\subsubsection*{Regularised Lagrangians for Helmholtz}

The Lagrangians derived above provide a variational characterisation of the Helmholtz equation from first principles. However, they are indefinite and lack coercivity. To address this, we construct regularised functionals that penalise the residual of the Helmholtz operator in a least-squares sense. In doing so, we aim to gain a deeper understanding of the analytical properties of the reduced wave equation and, in turn, develop tools for the design of numerical approximations with potentially improved properties.

Specifically, the introduction of the following real Lagrangian will be instrumental 
\begin{equation}
\begin{split}
\mathcal{F}_{\gamma} (v)
&= \int_{\Omega }  \frac12  \Bigl( |\nabla v |^2 - k^2|v|^2 \Bigr)  \d x
 -  {\Re \int_{\Omega } f  \overline{v}  \d  x} \\
&\quad + \gamma_1 \int_{\Omega} \bigl| \L v - f \bigr|^2   \d x
 +  \gamma_2 \int_{\partial \Omega} \Bigl| \partial_{\mathbf{n}} v - i k  v \Bigr|^2   \d S.
\end{split}
\end{equation}
Following the above reasoning, we introduce the corresponding complex Lagrangian to incorporate the influence of the impedance boundary conditions into the principal part
\begin{equation}\label{Complex-Energy_H_gamma_wbc}
\begin{split}
\mathcal{F}_{\gamma,\,  \C \, } (v)
&= \int_{\Omega }  \frac12  \Bigl( |\nabla v |^2 - k^2|v|^2 \Bigr)  \d x
 -  {\Re \int_{\Omega } f  \overline{v}  \d  x} \\
&\quad + \gamma_1 \int_{\Omega} \bigl| \L v - f \bigr|^2   \d x
 +  \gamma_2 \int_{\partial \Omega} \Bigl| \partial_{\mathbf{n}} v - i k  v \Bigr|^2   \d S - \frac{ik}{2}\int_{\partial \Omega}|v|^2 \d x .
\end{split}
\end{equation}
To emphasise the role of $v$ and $\vbar$ as independent variables when differentiating complex Lagrangians,   and using the obvious notation, we denote 
\begin{equation}\label{Complex-Energy_v_vbar}
\begin{split}
 \mathcal{F}_{\gamma,\,  \C \, } (v,  \overline{v}) = \,  &\mathcal{F}_{\gamma,\,  \C \, } (v)\, .
\end{split}\end{equation} 
The next section is devoted to the proof of our key result,  stating that the homogeneous part of   $F_\gamma(u), $ $F_\gamma(u):=\mathcal{F}_\gamma(u)\big|_{f=0}$ is coercive in $\Vc.$ In particular, we 
shall assume 
 the following geometric assumption on $\Omega$, compare to \cite{Moiola_Spence2014};  see Remark \ref{Rem3.6}.

\begin{assumption}\label{ass_omega}
There exists a constant $L_0>0$ such that
$\vec{x}\cdot\vec{n}\ge L_0$ for all $\vec{x}\in\partial\Omega$,
i.e. $\Omega$ is strictly star-shaped with respect to the origin.
Since $L=\mathrm{diam}(\Omega)$, one also has
\[
   L_0 \le \vec{x}\cdot\vec{n} \le L
   \quad \text{for all } \vec{x}\in\partial\Omega .
\]
\end{assumption}

The following coercivity result is the key analytical input. Its proof is
given in Section~\ref{sec:proofs}.

\begin{theorem} 
\label{theorem_coerc_wbc}
Assume that Assumption~\ref{ass_omega} holds. Let
$\alpha>\tfrac12$, $\beta>0$ and
$\varepsilon_1,\varepsilon_2,\varepsilon_3>0$. Then, for all $u\in\Vc$,
\begin{equation}\label{coerc_2}
\begin{split}
\dimx F_\gamma(u)
 \ge & \Big(\dimx \gamma_1
 - \frac{\alpha^2 L^2}{\varepsilon_1}
 - \frac{2\beta^2}{(\alpha-\tfrac12)\dimx}\Big) \|\L u\|^2
 + \Big(\frac{\dimx}{2}-\alpha(\dimx-2)-\varepsilon_1\Big)\|\nabla u\|^2 \\
& + \frac12(\alpha-\tfrac12)\dimx k^2\|u\|^2 \\
& + \Big(\alpha L_0-\alpha L^2\varepsilon_3-\alpha\varepsilon_2\Big)
   \|\nabla u\|^2_{\partial\Omega} \\
& + \Big(2\beta-\alpha L-\frac{\alpha L^2}{\varepsilon_2}\Big)
   k^2\|u\|^2_{\partial\Omega} \\
& + \Big(\dimx\gamma_2-\frac{\alpha}{\varepsilon_3}\Big)
   \|\partial_{\mathbf n}u-iku\|^2_{\partial\Omega}.
\end{split}
\end{equation}
Consequently, there exist positive constants
$c_0,c_1,c_2,c_3$ and thresholds $\gamma_{1,0},\gamma_{2,0}$,
all independent of $k$, such that, for
$\gamma_1\ge\gamma_{1,0}$ and $\gamma_2\ge\gamma_{2,0}$,
\begin{equation}
\label{coerc_d}
\begin{split}
F_\gamma(u) \ge
c_0\big(\|\L u\|^2+\|\nabla u\|^2+k^2\|u\|^2\big)
+ c_1\|\nabla u\|^2_{\partial\Omega}
+ c_2 k^2\|u\|^2_{\partial\Omega}
+ c_3\|\partial_{\mathbf n}u-iku\|^2_{\partial\Omega}.
\end{split}
\end{equation}
\end{theorem}

As a consequence of Theorem~\ref{theorem_coerc_wbc}, we have the following result.
\begin{theorem}\label{Theorem_SCVP_2}
Consider the Lagrangian  $\mathcal{F}_{\gamma,\,  \C \, } $ defined in \eqref{Complex-Energy_H_gamma_wbc}. 
Then, under the assumptions of Theorem \ref{theorem_coerc_wbc}, the following hold:
\begin{enumerate}
\item[(i)] {There exists $\boldsymbol{\gamma}_0=(\gamma_{1,0},\gamma_{2,0})$ such that for all $\boldsymbol{\gamma}=(\gamma_1,\gamma_2)$ with $\gamma_j\ge \gamma_{j,0}$, $j=1,2$, the  real part of $\mathcal{F}_{\gamma,\,  \C \, } \big|_{f=0}$   is strongly coercive in the norm $\|v\|_{\Vc}$.}
\item[(ii)] The stationary points of $\mathcal{F}_{\gamma,\,  \C \, } $ with respect to $\vbar, $    restricted to $\Vc, $ i.e., the elements   $u\in \Vc$ satisfying 
$$   D _ {\vbar } \, \mathcal{F}_{\gamma,\,  \C \, } ( u ) =0 \, $$
 are {exactly} the solutions of \eqref{Helm_PDE}.
\end{enumerate}
\end{theorem}

\noindent
Statement (i) is precisely Theorem \ref{theorem_coerc_wbc}.
Assuming this coercivity estimate, statement (ii) follows from the
complex version of the Lax--Milgram theorem \cite[Chapter VII]{Dautray_Lions_v2_1988}. To this end, consider the sesquilinear form $\mathcal{A}_{\gamma,\,  \C \, }$ defined by 
\begin{equation}\label{Aform_reg_wbc}
\begin{split}
\mathcal{A}_{\gamma,\,  \C \, }(u,v)
&:= \int_\Omega \nabla u\cdot \nabla \overline{v}  \d x  -  k^2 \int_\Omega u \overline{v}  \d x \\
&\quad +  2\gamma_1 \int_{\Omega} (\L u)  \overline{\L v}  \d x
 +  2\gamma_2 \int_{\partial\Omega} 
\Bigl(\partial_{\mathbf n}u - i k u \Bigr) 
\overline{\Bigl(\partial_{\mathbf n}v - i k v \Bigr)}  \d S \\
&\quad   -  {ik} \int_{\partial \Omega}u \, \vbar  \d x .
\end{split}
\end{equation}
{This form is linear in its first argument and antilinear in its second, and it arises as the first variation of $\mathcal{F}_{\gamma,\,  \C \, }$ with respect to $\vbar .$}
We conclude that
\begin{equation}\label{Energy_H_gamma_Re_2_wbc}
\begin{split}
\langle D_{\vbar\, }  \mathcal{F}_{\gamma,\,  \C \, }(u), v \rangle
=  \Bigl\{ \mathcal{A}_{\gamma,\,  \C \, }(u,v)  -  \int_{\Omega} f  \overline{\bigl(v + 2\gamma_1 \L v\bigr)}  \d x \Bigr\}.
\end{split}
\end{equation}
Thus, the stationary points of $\mathcal{F}_\gamma$ satisfy
\begin{equation}\label{Energy_H_gamma_Re_3_wbc}
\mathcal{A}_{\gamma,\,  \C \, } (u,v)  =  \int_{\Omega} f  \overline{\bigl(v + 2\gamma_1 \L v\bigr)}  \d x ,
\qquad \text{for all } v \in \Vc .
\end{equation}
Theorem \ref{theorem_coerc_wbc} implies
\begin{equation}
\Re \  \frac{1}{2} \mathcal{A}_{\gamma,\,  \C \, }(v,v)
 = \Re \  \mathcal{F}_{\gamma,\,  \C \, } (v)\Big|_{f=0}
 \ge  \tilde c  \|v\|_{\Vc}^{2},
\qquad \text{for all } v\in \Vc .
\end{equation}
{Thus $\mathcal{A}_{\gamma,\,  \C \, }$ is bounded on $\Vc \times\Vc $ and $  \mathcal{A}_{\gamma,\,  \C \, }$ is $\Vc$-elliptic, while the right-hand side $v\mapsto \int_\Omega f \overline{(v+2\gamma \L v)}$ defines a bounded antilinear functional on $\Vc$.}
Therefore, by the Lax-Milgram theorem \cite[Section~1, Chapter~VII]{Dautray_Lions_v2_1988} the variational problem 
\eqref{Energy_H_gamma_Re_3_wbc} has a unique solution. 
To complete the proof, consider $u$ being the unique weak solution of \eqref{Helm_PDE}. Then $u$ solves 
 \begin{equation*}
  \Bc(u,v) = \int_{\Omega   } f   \overline{v}   \d  x, \quad \text{for all } v\in \Hc\, . 
\end{equation*}
Furthermore, since $\Omega $ is a Lipschitz domain, $u \in \Vc, $ see Remark \ref{Rem3.4}    . In addition, 
$\L u =f$ in $L^2 (\Omega) $ and the impedance boundary conditions are satisfied in the $L^2 ( \partial \Omega) $
sense. Since $\Vc \subset \Hc $ we conclude that  $u$ satisfies \eqref{Energy_H_gamma_Re_3_wbc} and it is exactly the unique stationary point of $ \mathcal{F}_{\gamma,\,  \C \, }$ with respect to $\vbar\, .$

\subsubsection*{Related minimisation problems}
Our analysis may lead to alternative approaches for approximating \eqref{Helm_PDE}. In particular, it provides a mathematical foundation for iterative methods that incorporate boundary conditions either as Lagrange multipliers or through fixed-point iterations. To fix ideas, consider the following problem: assume that an approximation of $u, $ denoted by $\tilde w$, is given, and we seek to update it by solving the following problem:
  Find $w \in \Vc$ such that 
\begin{equation}\label{fixed_p_1}
\begin{split}
  \int_\Omega \nabla w\cdot \nabla \overline{v}  \d x  &-  k^2 \int_\Omega w \overline{v}  \d x
  +
  2\gamma_1 \int_{\Omega} (\L w)  \overline{\L v}  \d x
 +  2\gamma_2 \int_{\partial\Omega} 
\Bigl(\partial_{\mathbf n}w - i k w \Bigr) 
\overline{\Bigl(\partial_{\mathbf n}v - i k v \Bigr)}  \d S \\
&\quad  =   {ik} \int_{\partial \Omega}\widetilde w \, \vbar \,   \d S  +  \int_{\Omega} f  \overline{\bigl(v + 2\gamma_1 \L v\bigr)}  \d x ,
\qquad \text{for all } v \in \Vc .
\end{split}
\end{equation}
Consider this problem with all data fixed except for  
$\widetilde w.$ We shall show that it admits a unique solution, and denote by $T$ the solution operator that maps   $\widetilde w \mapsto w :$   
$$ T : \Vc \to \Vc \, , \qquad w = T ( \widetilde w )\, .$$
We 
consider the sesquilinear form $ \mathcal{A}_{\text{wbc}} $ defined by 
\begin{equation}
\begin{split}
\mathcal{A}_{\text{wbc}}(u,v)
&:= \int_\Omega \nabla u\cdot \nabla \overline{v}  \d x  -  k^2 \int_\Omega u \overline{v}  \d x \\
&\quad +  2\gamma_1 \int_{\Omega} (\L u)  \overline{\L v}  \d x
 +  2\gamma_2 \int_{\partial\Omega} 
\Bigl(\partial_{\mathbf n}u - i k u \Bigr) 
\overline{\Bigl(\partial_{\mathbf n}v - i k v \Bigr)}  \d S .
\end{split}
\end{equation}
Then $w$ solves
\begin{equation}\label{fixed_p_2}
\mathcal{A}_{\text{wbc}} (w,v)  = {ik} \int_{\partial \Omega}\widetilde w \, \vbar \,   \d S   + \int_{\Omega} f  \overline{\bigl(v + 2\gamma_1 \L v\bigr)}  \d x    
\qquad \text{for all } v \in \Vc .
\end{equation}
Notice that  $\mathcal{A}_{\text{wbc}} $ {is Hermitian (conjugate-symmetric)}, i.e. $\mathcal{A}_{\text{wbc}} (u,v)=\overline{\mathcal{A}_{\text{wbc}} (v,u)}$.
The next theorem shows that $w$ can be obtained as a minimiser of the real Lagrangian
\begin{equation}\label{Energy_H_gamma_wbc}
\begin{split}
\widetilde {\mathcal{F}}_{\gamma} (v)
&= \int_{\Omega }  \frac12  \Bigl( |\nabla v |^2 - k^2|v|^2 \Bigr)  \d x
 -  {\Re \ {ik} \int_{\partial \Omega}\widetilde w \, \vbar \, \d S} -  {\Re \int_{\Omega } f  \overline{v}  \d  x}\\
&\quad + \gamma_1 \int_{\Omega} \bigl| \L v - f \bigr|^2   \d x
 +  \gamma_2 \int_{\partial \Omega} \Bigl| \partial_{\mathbf{n}} v - i k  v \Bigr|^2   \d S \, .
\end{split}
\end{equation}

\begin{theorem}
Consider the energy $\widetilde {\mathcal{F}}_{\gamma}$ defined in \eqref{Energy_H_gamma_wbc}, and the minimisation problem
\begin{equation}\label{min_V}
\min_{v \in \Vc} \widetilde {\mathcal{F}}_{\gamma} (v).
\end{equation}
Then, under the assumptions of Theorem \ref{theorem_coerc_wbc}, the following hold:
\begin{enumerate}
\item[(i)] {There exists $\boldsymbol{\gamma}_0=(\gamma_{1,0},\gamma_{2,0})$ such that for all $\boldsymbol{\gamma}=(\gamma_1,\gamma_2)$ with $\gamma_j\ge \gamma_{j,0}$, $j=1,2$, the quadratic part of $ \widetilde {\mathcal{F}}_{\gamma}$ (i.e. with $f=0$ and $\widetilde w=0$) is strongly coercive in the norm $\|v\|_{\Vc}$.}
\item[(ii)] The problem \eqref{min_V} has a unique solution {that coincides with the   solution of} \eqref{fixed_p_2}.
\item[(iii)] Both \eqref{min_V} and \eqref{fixed_p_2} admit a unique solution $w$  that satisfies: 
$$ \| w\| _{\Vc} \leq C _{T} \big ( \| \widetilde w \|  _{\Vc} + \| f\|_{L^2 (  \Omega)} \big )  \, .$$
Furthermore, if in addition $\gamma_1\ge \tilde \gamma_{1,0}\geq \gamma_{1,0}$ for an appropriately chosen $\tilde \gamma_{1,0},$ $T$ is a contraction in $ L^2 (\partial \Omega):$ 
$$  \|T( \widetilde w _ 1) - T(\widetilde w _2) \| _{L^2 (\partial \Omega)} 
\leq \, \eta \,  \| \widetilde w _ 1 - \widetilde w _2\|  _{L^2 (\partial \Omega)}  \, , \quad \text{for a constant } \ 0< \eta < 1\, .   $$
\end{enumerate}
\end{theorem}

As before (i) follows by Theorem \ref{theorem_coerc_wbc}.    Statements (ii) and (iii) then follow from the {complex} Lax-Milgram theorem. In fact,   the sesquilinear form $ \mathcal{A}_{\text{wbc}} $  arises as the first variation of the \emph{real} Lagrangian $\widetilde {\mathcal{F}}_{\gamma} $.
We conclude that
\begin{equation}\label{Energy_H_gamma_Re_2_wbc}
\begin{split}
\langle D\widetilde {\mathcal{F}}_{\gamma}(u), v \rangle
= \Re \Bigl\{ \mathcal{A}_{\text{wbc}}(u,v)  -   \ {ik} \int_{\partial \Omega}\widetilde w \, \vbar \,  \d x - \int_{\Omega} f  \overline{\bigl(v + 2\gamma_1 \L v\bigr)}  \d x \Bigr\}.
\end{split}
\end{equation}
Thus, the stationary points of $\widetilde {\mathcal{F}}_{\gamma} $ are solutions of \eqref{fixed_p_2}. 
Theorem \ref{theorem_coerc_wbc} implies
\begin{equation}\label{Energy_H_gamma_Re_4_wbc}
\Re  \frac{1}{2} \mathcal{A}_{\text{wbc}}(v,v) 
 \ge  \tilde c  \|v\|_{\Vc}^{2},
\qquad \text{for all } v\in \Vc .
\end{equation}
In addition the right-hand side of  \eqref{fixed_p_2} defines a bounded antilinear functional on $\Vc$. {Therefore, by \cite[Section~1, Chapter~VII]{Dautray_Lions_v2_1988}, the variational problem \eqref{fixed_p_2} admits a unique solution, and statement (ii) follows.} 
The fact that $T$ is a contraction in $L^2(\partial \Omega)$ follows from the observation that the boundary term involving $\widetilde w$ in \eqref{fixed_p_2} is independent of $\gamma_1$ and $\gamma_2$, together with the bound \eqref{coerc_2}, provided that $\beta$ is chosen such that
in $\Big(2\beta - \alpha L - \frac{\alpha L^2}{\varepsilon_2}\Big)$
is sufficiently large, which in turn implies that $\gamma_1$ must be taken sufficiently large. The remainder of statement (iii) is then immediate.

\subsubsection*{A convergent fixed point iteration} Let $\rho >0$ and consider  the following iterative scheme:
  \begin{equation}\label{fixed_point_it_tilde}
  	\widetilde w ^{n+1} = (1-\rho)\widetilde w ^{n} + \rho\,  w^{n+1} = (1-\rho)\widetilde w ^{n} + \rho\,  T (\widetilde w ^{n}) =: G (\widetilde w ^{n} )\, .
  \end{equation}  
  Utilising (iii) of the above theorem we have
  $$  \|G( \widetilde w _ 1) - G(\widetilde w _2) \| _{L^2 (\partial \Omega)} 
\leq \, \Big [(1-\rho) + \eta \rho \, \Big ]\,  \| \widetilde w _ 1 - \widetilde w _2\|  _{L^2 (\partial \Omega)}  \, .$$
Since $ 0< \eta < 1    $ it follows that $G$ is a contraction for any $\rho ,$ $0< \rho < 1\, .$
Hence 
the sequence converges to the fixed point $u$ in  $L^2 (\partial \Omega)\, .$
By the stability bound, 
$$ \| u- T(\widetilde w ^{n}) \| _{\Vc} \leq C   \, k \, \| u - \widetilde w ^{n}  \|  _{L^2 (\partial \Omega)}   \, ,$$
we conclude that $T(\widetilde w ^{n}) \to u$ in $\Vc\, .$

\begin{remark}\label {Rem3.3}  
Notice that it is not possible to approximate the solution of the
Helmholtz equation \eqref{Helm_PDE} by solutions of minimisation
problems of the form \eqref{min_V} when the regularised terms are not
present, i.e., $\gamma_1 = \gamma_2 = 0$. In fact, in this case the
corresponding variational formulation \eqref{fixed_p_2} may not even
be well posed.
\end{remark}

\begin{remark}\label {Rem3.4}  
  Due to the fact that the impedance condition induces increased
  regularity at the boundary, weak solutions of the Helmholtz equation
  \eqref{Helm_PDE} in Lipschitz domains satisfy $u \in \Vc$, provided
  that $f \in L^2(\Omega)$. See Theorem 4.24(ii) in \cite{McLean_book}
  and Proposition 3.2 in \cite{Moiola_Spence2014}.
\end{remark}

\begin{remark}\label {Rem3.5}  
  In the case where $\widetilde w$ is taken to be zero, the solution
  of \eqref{min_V} and \eqref{fixed_p_2} still exists and is also the
  solution of the Helmholtz equation \eqref{Helm_PDE}, provided the
  compatibility condition $ik\, u = \partial _{\mathbf{n}} u = 0$
  holds.	
\end{remark}

\begin{remark}\label {Rem3.6}
The geometric Assumption \ref{ass_omega},
$
\vec{x}\cdot\vec{n}\ge L_0$ for all $ \vec{x}\in\partial\Omega,
$
is used to control from below the boundary terms arising in the proof of the coercivity estimate in Theorem 4.2. Whether the same result, or a weaker version of it, can be established under weaker geometric assumptions remains an open question.
 \end{remark}

\begin{remark}\label{remark_MS}
It is interesting to compare the problem \eqref{Energy_H_gamma_Re_3_wbc} to the sesquilinear framework introduced in \cite{Moiola_Spence2014}. {A simplified version of the sesquilinear form $b(\cdot,\cdot)$, shown there to be strongly coercive under appropriate geometric assumptions, can be written as}
\begin{equation}\label{bform_MS}
\begin{split}
b(u,v)  :=  \int_\Omega \nabla u\cdot \nabla \overline{v}  \d x  +  k^2 \int_\Omega  u \overline{v} \d x
 +  \int_{\Omega} \Bigl(-\mathcal{M}u + \frac{1}{3k^2} \L u \Bigr)  \overline{\L v}  \d x \\
-  \int_{\partial\Omega} \Bigl(
ik u \overline{\mathcal{M}v}
 + 
\Bigl( \vec{x}\cdot \nabla_{\partial\Omega} u - i k \beta  u + \frac{\dimx-1}{2} u \Bigr) 
\overline{\Bigl( \frac{\partial v}{\partial n} \Bigr)}\\
 + 
\vec{x}\cdot\vec{n} \bigl(
k^2 u \overline{v} - \nabla_{\partial\Omega} u \cdot \overline{\nabla_{\partial\Omega} v}
\bigr)
\Bigr)  \d S.
\end{split}
\end{equation}
Here $\mathcal{M}u := \vec{x}\cdot \nabla u - i k \beta  u + \frac{\dimx-1}{2} u$ and $\beta\in\mathbb{R}$ {is a free parameter}. {In} \cite{Moiola_Spence2014} it is shown that if $u$ is the solution of the {Helmholtz} equation then
\[
b(u, v)  =  \int_{\Omega } f   \overline{\Bigl(\mathcal{M}v + \frac{1}{3k^2}  \L v\Bigr)}   \d x,
\quad \text{for all } v \in \Vc.
\]
Notice that the space $V$ used in \cite{Moiola_Spence2014} coincides with our space $\Vc$ although the corresponding norms differ in scale. Also, in our definition of $\| \cdot\|_{\Vc}$ the term $\| \L v\| $ appears instead of  $\| \Delta v\| .$  The motivation for introducing \eqref{bform_MS} comes directly from a Morawetz
identity for the expression, \cite[Section 1.4]{Moiola_Spence2014},
\begin{align}\label{morawetz_MS}
\int_\Omega \overline{ \mathcal{M} v } \L u\, \d x
+
\int_\Omega  \mathcal{M}u  \overline{\L v}\, \d x = M (u, v)\, ,
&
\end{align}
where $M(u, v)$ involves bulk and boundary terms and $u, v$ are any smooth enough functions, see \cite[Lemma 2.4]{Moiola_Spence2014}.
If $u$ is a solution of the Helmholtz equation one has
\begin{align}\label{morawetz_MS_2}
\int_\Omega \overline{ \mathcal{M} v } f\, \d x
+
\int_\Omega  \mathcal{M}u  \overline{\L v}\, \d x = M (u, v)\, .
&
\end{align}
The final definition of $b(\cdot, \cdot)$ arises by using the boundary condition and adding a multiple of 
$$ \int_{\Omega} \Bigl( \frac{1}{ k^2} \L u \Bigr)  \overline{\L v}  \d x  = \int_{\Omega} \Bigl( \frac{1}{ k^2} f \Bigr)  \overline{\L v}  \d x $$ to both sides of the above identity, see \cite[Proof of Proposition 3.2]{Moiola_Spence2014} for details. It is then shown that 
 $b(\cdot, \cdot)$ is coercive with respect to the norm on $V.$ It will become evident in the next section that our design strategy takes a different perspective, both in its motivation and in the way Morawetz identities are incorporated.  
\end{remark}

\section{Proofs for strongly coercive variational principles}
\label{sec:proofs}

\subsection*{Design strategy}
\label{sec:design}
Our construction of the enhanced Lagrangians  is intrinsically connected to the analysis establishing their coercivity. 
The key idea is to observe that the energy $ \mathcal{E}_P (u)$ contains the indefinite term   
\begin{equation}\label{energy_wd}
    \frac{1}{2} \int_\Omega \qp{\norm{\nabla u}^2 - k^2 \norm{u}^2} \,  \d x \, , 
\end{equation}
and at the same time a standard Rellich identity, see \eqref{eq:IntRellich}, can be written as
 \begin{equation}
      \label{eq:IntRellich_wd}
    \int_\Omega \qp{(\dimx  -2) \norm{\nabla u}^2 - \dimx k^2 \norm{u}^2} \,  \d x \\
     + \int_\Omega 2 \Re\qp{\qp{\vec{x} \cdot \nabla u} \overline{\L  u}} \,  \d x
    + B_R(u)=0,
 	 \end{equation}
where $ B_R(u)$ accounts for the boundary terms in \eqref{eq:IntRellich}. 
Then, an appropriate linear combination of the principal part of the Lagrangian for $f=0$  and {  the identity} \eqref{eq:IntRellich_wd} 
has two {  key advantages}: the energy {  changes} only by a constant factor, since the contribution of \eqref{eq:IntRellich_wd}  is zero,
and the coefficients  of $\norm{\nabla u}^2$ and $k^2 \norm{u}^2 $ {  can both be made} constants of the same sign. 
The least-squares regularisation term in the Lagrangian  functional {  makes it possible} to control the second term in \eqref{eq:IntRellich_wd}. 
{  Moreover,} the boundary term in \eqref{eq:IntRellich_wd} can also be controlled, {  particularly} through the use of {  another} 
lower-order Morawetz identity---a fact that is not immediately apparent. It is important to emphasise that, within this framework, the structure 
of the regularised Largangian is preserved, and coercivity is ensured for all $\gamma \geq \gamma_0$, where the constant $ \gamma_0$
can be determined explicitly.  

 {Next, to simplify the presentation of our approach and the proof of Theorem 4.2, we first establish the coercivity bound under the assumption that the impedance condition is satisfied exactly; see Theorem 4.1. This is done for all elements of the space $\Vcbc \subset \Vc$.}

\subsubsection*{Rellich and Morawetz Identities}

  Let $\L u = -\Delta u - k^2 u$ be the Helmholtz operator applied to a
  sufficiently smooth function $u \colon \mathbb{R}^\dimx \to
  \mathbb{C}$. Then the following identity holds:
  \begin{align}\label{rellich_id}
    &\qp{\vec{x} \cdot \overline{\nabla u}} \L  u
    +
    \qp{\vec{x} \cdot \nabla u} \overline{\L  u} \nonumber\\
    &=
       -\div\qp{
      (\vec{x} \cdot \overline{\nabla u}) \nabla u
      +
      (\vec{x} \cdot \nabla u) \overline{\nabla u}
      -
      \vec{x} \norm{\nabla u}^2
      +
      k^2 \vec{x} \norm{u}^2
    }
    -
    (\dimx -2) \norm{\nabla u}^2
    +
    \dimx k^2 \norm{u}^2 ,
  \end{align}
where $\vec{x} \cdot \nabla u = \sum_{{  j=1}}^{\dimx} x_j \frac{\partial u}{\partial x_j}$ is the directional derivative {  of $u$} along $\vec{x}$.

Although this identity is well known \cite{CummingsFeng2006, Spence_Chandler-Wilde_integral_2011, Moiola_Spence2014} as the simplest form of a Morawetz {  (or Rellich)} identity, we briefly outline the main {  steps of its derivation} to highlight the role of the Morawetz multiplier.

To begin, {  let  $\Phi$ denote the left-hand side of identity \eqref{rellich_id}. Using the definition of the Helmholtz operator $\L$, we have}
  \begin{equation}
    \Phi
    =
    \qp{\vec{x} \cdot \overline{\nabla u}}
    \qp{-\Delta u - k^2 u}
    +
    \qp{\vec{x} \cdot \nabla u}
    \qp{-\Delta \overline{u} - k^2 \overline{u}}.
  \end{equation}
  Expanding each term {  gives}
  \begin{equation}
    \Phi
    =
    -\qp{\vec{x} \cdot \overline{\nabla u}} \Delta u
    -
    k^2 \qp{\vec{x} \cdot \overline{\nabla u}} u 
    -
    \qp{\vec{x} \cdot \nabla u} \Delta \overline{u}
    -
    k^2 \qp{\vec{x} \cdot \nabla u} \overline{u}.
\end{equation}
The terms involving the Laplacian {  expand as follows:}
\begin{equation}
  -\qp{\vec{x} \cdot \overline{\nabla u}} \Delta u
  -
  \qp{\vec{x} \cdot \nabla u} \Delta \overline{u}
  = 
  -\div\qp{\qp{\vec{x} \cdot \overline{\nabla u}} \nabla u
    +
    \qp{\vec{x} \cdot \nabla u} \overline{\nabla u}
    {  -
     \vec{x} \norm{\nabla u}^2}}
  {  -
  (\dimx - 2)} \norm{\nabla u}^2,
\end{equation}
{  where we have used that}
\begin{equation}
  \div\qp{\vec{x} \norm{\nabla u}^2} = {  \dimx}\, \norm{\nabla u}^2
  + \qp{\vec{x} \cdot \nabla}\qp{\norm{\nabla u}^2}.
\end{equation}
For the {  remaining}  two terms, we {  observe}  that {  they can be written in the form}
\begin{equation}
  - k^2 \qp{\qp{\vec{x} \cdot \overline{\nabla u}} u
    +
    \qp{\vec{x} \cdot \nabla u} \overline{u}}
  =
  - k^2 \qp{\vec{x} \cdot \nabla \qp{\norm{u}^2}}.
\end{equation}
Rewriting {  this} in divergence form, {  we obtain}
\begin{equation}
  - k^2 \qp{\vec{x} \cdot \nabla \qp{\norm{u}^2}}
  =
  -\div \qp{k^2 \vec{x} \norm{u}^2} + \dimx k^2 \norm{u}^2.
\end{equation}
Combining all   the  contributions  derived above, we  recover identity  \eqref{rellich_id}. 
 Integration  over $\Omega$,   together with  the divergence theorem and a density argument  
(see, e.g.,  \cite{CummingsFeng2006, Spence_Chandler-Wilde_integral_2011,  Moiola_Spence2014})
 yields the integrated form of identity \eqref{rellich_id}, stated in the following proposition.  

\begin{proposition}[Rellich/Morawetz Identity]
  \label{cor:intmorawetz}
  {  Let $u\in \Vc,$ $\L u = -\Delta u - k^2 u$   and $\Omega$ be a   domain in $\mathbb{R}^\dimx $ 
  with Lipschitz boundary $\partial \Omega$.} Then the following  identity holds:
  \begin{multline}    \label{eq:IntRellich}
    \int_\Omega 2 \Re\qp{\qp{\vec{x} \cdot \nabla u} \overline{\L  u}} \,  \d x
    +
    \int_\Omega \qp{(\dimx  -2) \norm{\nabla u}^2 - \dimx k^2 \norm{u}^2} \,  \d x \\
    +
    \int_{\partial \Omega} \qp{
      2 \Re\qp{\qp{\vec{x} \cdot \overline{\nabla u}} \qp{\nabla u \cdot \vec{n}}}
      -
      \norm{\nabla u}^2 \qp{\vec{x} \cdot \vec{n}}
      +
      k^2 \norm{u}^2 \qp{\vec{x} \cdot \vec{n}}
    } \,  \d S =0,
  \end{multline}
  where $\vec{n}$ is the outward unit normal vector on $\partial
  \Omega$, and $ \d S$ is the surface element.
\end{proposition}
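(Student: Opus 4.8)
The plan is to obtain \eqref{eq:IntRellich} by integrating the pointwise identity \eqref{rellich_id} over $\Omega$, converting its divergence term into a boundary integral via the divergence theorem, and then removing the smoothness hypothesis by a density argument. For the first step I would take $u\in C^\infty(\overline{\Omega})$ (more generally $u\in H^2(\Omega)\subset\Vc$) and integrate \eqref{rellich_id}. Its left-hand side is exactly $2\Re\qp{\qp{\vec{x}\cdot\nabla u}\overline{\L u}}$, which supplies the first bulk integral of \eqref{eq:IntRellich}; the two algebraic terms on the right contribute $-\int_\Omega\qp{(\dimx-2)\norm{\nabla u}^2-\dimx k^2\norm{u}^2}\d x$; and, by the divergence theorem on the bounded Lipschitz domain $\Omega$, the divergence term contributes
\[
-\int_{\partial\Omega}\Bigl((\vec{x}\cdot\overline{\nabla u})(\nabla u\cdot\vec{n})+(\vec{x}\cdot\nabla u)(\overline{\nabla u}\cdot\vec{n})-(\vec{x}\cdot\vec{n})\norm{\nabla u}^2+k^2(\vec{x}\cdot\vec{n})\norm{u}^2\Bigr)\,\d S .
\]
Since the first two boundary integrands are complex conjugates of one another, their sum equals $2\Re\qp{\qp{\vec{x}\cdot\overline{\nabla u}}\qp{\nabla u\cdot\vec{n}}}$, and collecting all terms yields \eqref{eq:IntRellich} for smooth $u$.

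Next I would extend the identity from smooth $u$ to $u\in\Vc$. The key observation is that every term appearing in \eqref{eq:IntRellich} is continuous on $(\Vc,\|\cdot\|_{\Vc})$: using Cauchy-Schwarz and $\norm{\vec{x}}\le R$ on $\overline{\Omega}$, the bulk integrands are bounded by $\|\nabla u\|_\Omega$, $k\|u\|_\Omega$ and $\|\L u\|_\Omega$, while the boundary integrand is bounded by $\|\nabla u\|_{\partial\Omega}$ and $k\|u\|_{\partial\Omega}$ — all of which are controlled by $\|u\|_{\Vc}$. Here one uses that for $u\in\Vc$ the full gradient trace $\nabla u|_{\partial\Omega}$ is a well-defined $L^2(\partial\Omega)$ object: its tangential part because $u\in H^1(\partial\Omega)$, and its normal part because $\partial_{\mathbf n}u=(\partial_{\mathbf n}u-iku)+iku\in L^2(\partial\Omega)$. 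Since products of $L^2$-convergent sequences converge in $L^1$, it then suffices to approximate a given $u\in\Vc$ by smooth functions in the $\|\cdot\|_{\Vc}$-norm and pass to the limit term by term.

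The main obstacle is precisely this density statement on a general bounded Lipschitz domain: because $u\in H^1(\Omega)$ with $\Delta u\in L^2(\Omega)$ need not lie in $H^2(\Omega)$, one cannot simply invoke $H^2$-density, and one must instead construct approximations that converge simultaneously in the bulk and in all the boundary components of $\|\cdot\|_{\Vc}$. I would produce these by the standard localisation-and-mollification procedure — a partition of unity subordinate to a boundary covering, local flattening of $\partial\Omega$, interior mollification, and translation in the (flattened) normal direction — exploiting the one-sided graph structure built into the definition of $\Vc$, following the constructions used for the same identity in \cite{CummingsFeng2006, Spence_Chandler-Wilde_integral_2011, Moiola_Spence2014}. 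Alternatively, one can bypass density by interpreting \eqref{rellich_id} weakly: test $\L u\in L^2(\Omega)$ against the Morawetz multiplier $\vec{x}\cdot\nabla u\in L^2(\Omega)$, integrate the Laplacian contributions by parts using the $H^1(\partial\Omega)$- and $L^2(\partial\Omega)$-trace structure encoded in $\Vc$ to make the boundary pairings meaningful, and rearrange to arrive at \eqref{eq:IntRellich} directly.
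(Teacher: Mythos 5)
Your proposal is correct and follows essentially the same route as the paper: integrate the pointwise identity \eqref{rellich_id} over $\Omega$, convert the divergence term to a boundary integral (noting the two cross terms are complex conjugates), and extend from smooth functions to $\Vc$ by the density argument that the paper likewise defers to \cite{CummingsFeng2006, Spence_Chandler-Wilde_integral_2011, Moiola_Spence2014}. In fact you supply more detail than the paper does, in particular the continuity of each term with respect to $\|\cdot\|_{\Vc}$ and the observation that the full gradient trace lies in $L^2(\partial\Omega)$ for $u\in\Vc$.
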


The following identity will be important for the analysis that follows.

\begin{proposition}[Low-order Morawetz Identity]
  \label{the:boundarymorawetz}
  Let $u\in \Vc$, $\L u = -\Delta u - k^2 u$, and let $\Omega$ be {a bounded} domain in $\mathbb{R}^\dimx$ 
  with Lipschitz boundary $\partial \Omega$. 
  Define
  \begin{equation}  \label{eq:multM0}
    M_0 u :=  - i k \beta  u,
  \end{equation}
  where $\beta \in \mathbb{R}$ is a constant. Then the following integral identity holds:
 \begin{equation} \label {M0}
    \int_\Omega  2  \Re  \big( \overline{M_0 u}  \L  u\big)    \d x
     -    2 k ^2 \beta  \int_{\partial \Omega}   |u|^2    \d S       
     +  2  \Re   \int_{\partial \Omega} i k \beta   \overline{u}   \Big ( \nabla u \cdot \vec{n}
          -ik   u \Big )    \d S  =  0.
  \end{equation}
  {When traces are not $L^2$-valued, the boundary integrals are interpreted in the $H^{-1/2}(\partial\Omega)$-$H^{1/2}(\partial\Omega)$ duality sense.}
\end{proposition}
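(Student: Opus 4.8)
The identity \eqref{M0} is, at bottom, an algebraic consequence of a pointwise Leibniz computation combined with the (generalised) divergence theorem; it uses no property of the Helmholtz equation, only that $\beta$ and $k$ are real. The plan is to establish it first for smooth $u$ by a direct calculation, and then extend to $u\in\Vc$ via the generalised Green's formula (equivalently, by density).

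For the core computation, since $k,\beta\in\mathbb{R}$ we have $\overline{M_0 u}=ik\beta\,\overline u$, so $\overline{M_0 u}\,\L u = -ik\beta\,\overline u\,\Delta u - ik^3\beta\,|u|^2$; the zeroth-order piece is purely imaginary and is therefore annihilated by $2\Re(\cdot)$, while $\Re(-iz)=\Im(z)$ turns the remaining piece into $2\Re(\overline{M_0 u}\,\L u)=2k\beta\,\Im(\overline u\,\Delta u)$. Integrating over $\Omega$ and applying Green's first identity, $\int_\Omega\overline u\,\Delta u\,\d x=-\int_\Omega|\nabla u|^2\,\d x+\int_{\partial\Omega}\overline u\,(\nabla u\cdot\vec n)\,\d S$, the real bulk term $-\int_\Omega|\nabla u|^2\,\d x$ drops out of the imaginary part, leaving
\[
\int_\Omega 2\Re\bigl(\overline{M_0 u}\,\L u\bigr)\,\d x = 2k\beta\,\Im\int_{\partial\Omega}\overline u\,(\nabla u\cdot\vec n)\,\d S .
\]
It then remains to expand the last boundary integrand of \eqref{M0}: writing $2\Re\bigl(ik\beta\,\overline u\,(\nabla u\cdot\vec n - iku)\bigr)=-2k\beta\,\Im\bigl(\overline u\,(\nabla u\cdot\vec n)\bigr)+2k^2\beta\,|u|^2$ and adding it to the middle term $-2k^2\beta\int_{\partial\Omega}|u|^2\,\d S$ and to the boundary expression just obtained from the bulk term, the three boundary contributions cancel pairwise and the total is zero, which is precisely \eqref{M0}.

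For the regularity I would first run the above for $u\in C^\infty(\overline{\Omega})$, where Green's identity and all traces are classical, and then reach $u\in\Vc$ by observing that $\Delta u=-\L u-k^2u\in L^2(\Omega)$ and that $v\in H^1(\partial\Omega)$ together with $\partial_{\mathbf{n}}v-ikv\in L^2(\partial\Omega)$ forces $\partial_{\mathbf{n}}v=(\partial_{\mathbf{n}}v-ikv)+ikv\in L^2(\partial\Omega)$; hence the generalised Green's formula for $H^1$ functions with $L^2$ Laplacian applies verbatim and every integral in \eqref{M0} is a genuine Lebesgue integral. For a broader class of $u$ (as indicated in the statement of the proposition) the two boundary integrals are read as $H^{-1/2}(\partial\Omega)$--$H^{1/2}(\partial\Omega)$ pairings, consistent with the $L^2$ ones. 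The only point requiring any real care — and it is mild — is this trace bookkeeping on the Lipschitz boundary: confirming that $\partial_{\mathbf{n}}u$ lies in the space where the generalised Green's identity can be invoked, and that the duality interpretation of $\int_{\partial\Omega}\overline u\,(\nabla u\cdot\vec n)$ and $\int_{\partial\Omega}|u|^2$ is unambiguous. Everything else is a routine Leibniz-plus-divergence-theorem calculation.
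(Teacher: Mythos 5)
Your proof is correct and follows essentially the same route as the paper's: apply Green's first identity to $\overline{u}\,\Delta u$, observe that the purely imaginary bulk terms are killed by taking the real part, and complete the boundary integrand into the impedance residual $\nabla u\cdot\vec n - iku$ so that the boundary contributions cancel. Your additional bookkeeping on why $\Delta u\in L^2(\Omega)$ and $\partial_{\mathbf n}u\in L^2(\partial\Omega)$ for $u\in\Vc$ is a welcome refinement of the density argument the paper leaves implicit.
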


\begin{proof}
Using the definition of $\L$ we observe
\begin{align*} 
    \int_\Omega \overline{M_0 u}  \L  u    \d x
    &=    \int_\Omega i k \beta   \overline{u}  \L  u    \d x
      =  -   \int_\Omega i k \beta   \overline{u}  \Delta u    \d x  -   \int_\Omega i k^3 \beta   |u|^2    \d x
      \\
    &=  \int_\Omega i k \beta   \overline{\nabla  u} \cdot \nabla u    \d x  -   \int_{\partial \Omega} i k \beta   \overline{u}  \nabla u \cdot \vec{n}   \d S
        -   \int_\Omega i k^3 \beta   |u|^2    \d x
        \\
   &=  \int_\Omega i k \beta   \|\nabla u\|^2   \d x 
        -   \int_\Omega i k^3 \beta   |u|^2    \d x  -   \int_{\partial \Omega} i k \beta   \overline{u}   \Big ( \nabla u \cdot \vec{n}
          -ik   u \Big )    \d S\\
   &\qquad\qquad\qquad\qquad\qquad\qquad +  k ^2 \beta  \int_{\partial \Omega} |u|^2    \d S  .
\end{align*}
Since 
\[
\overline{M_0 u}  \L  u + M_0 u  \overline{\L  u}= 2  \Re  \big( \overline{M_0 u}  \L  u\big),
\]
we conclude
\begin{equation*}
    \int_\Omega  2  \Re  \big( \overline{M_0 u}  \L  u\big)    \d x
     =     2 k ^2 \beta  \int_{\partial \Omega} |u|^2    \d S
         -  2  \Re   \int_{\partial \Omega} i k \beta   \overline{u}  \Big ( \nabla u \cdot \vec{n}
          -ik   u \Big )    \d S ,
\end{equation*}
as required.
\end{proof}

\begin{remark}
{The multiplier $M_0$ in \eqref{eq:multM0} does not include the transport term $\vec{x} \cdot \nabla u$ that appears in standard Morawetz multipliers; it is a lower-order choice tailored to control boundary contributions together with the impedance residual.}
\end{remark}

\subsection*{Lagrangians and coercivity bounds}
{Let $\Omega \subset \mathbb{R}^\dimx$ be a bounded Lipschitz domain (for the Rellich identities we take the multiplier $x$, which may be replaced by $x-x_0$ to avoid assuming the origin lies in $\Omega$).} Consider the Helmholtz equation with a source term:
\[
  \L u = f, \qquad \text{where } \L u := -\Delta u - k^2 u,
\]
subject to the impedance boundary condition
\begin{equation}\label{bc_section4}
  \nabla u \cdot \vec{n} = i k  u \quad \text{on } \partial \Omega.
\end{equation}
 {We shall first examine the coercivity of  }
\begin{equation}\label{Egamma}
  E_\gamma(u)
  :=
  \frac{1}{2} \int_\Omega \qp{\norm{\nabla u}^2 - k^2 \norm{u}^2}    \d x
   + 
  \gamma \int_\Omega \norm{\L u}^2    \d x,
\end{equation}
in the space $\Vcbc \, .$

\begin{lemma}[Energy Identity with Penalty]
  Let $u \colon \mathbb{R}^\dimx \to \mathbb{C}$ be a sufficiently smooth
  function. 
  Then for any $\alpha \in \mathbb{R}$ the following identity
  holds:
  \begin{equation} \label{dimxEgamma}
      \dimx  E_\gamma(u)  =  E_{\gamma,\Omega}(\dimx,\alpha;u)  +  E_{\gamma,\partial\Omega}(\dimx,\alpha;u),
  \end{equation}
  where
  \begin{align}  
     E_{\gamma,\Omega}(\dimx,\alpha;u)
     &:= \dimx \gamma \Norm{\L u}^2
    +
    \qp{\frac{\dimx}{2} - \alpha(\dimx-2)}\Norm{\nabla u}^2
    +
    \qp{\alpha - \frac{1}{2}} \dimx  k^2 \Norm{u}^2 \nonumber\\
    &\hspace{2.4em}
    - 
    \alpha \int_\Omega 2 \Re\qp{\qp{\vec{x} \cdot \nabla u}  \overline{\L u}}    \d x , \label{dimxEgammabulk} \\
    E_{\gamma,\partial\Omega}(\dimx,\alpha;u)
    &:= 
    - 
    \alpha \int_{\partial \Omega} \qp{
      2 \Re\qp{\qp{\vec{x} \cdot \overline{\nabla u}} \qp{\nabla u \cdot \vec{n}}}
      -
      \norm{\nabla u}^2 \qp{\vec{x} \cdot \vec{n}}
      +
      k^2 \norm{u}^2 \qp{\vec{x} \cdot \vec{n}}
    }    \d S .  \label{dimxEgammabdry} 
 \end{align}
\end{lemma}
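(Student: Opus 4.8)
The plan is to read the claimed identity as $\dimx\, E_\gamma(u)$ with $\alpha$ times the integrated Rellich/Morawetz identity \eqref{eq:IntRellich} added in. Since the left-hand side of \eqref{eq:IntRellich} vanishes identically, adding any real multiple of it leaves $\dimx\, E_\gamma(u)$ unchanged; this is exactly why the stated identity holds for \emph{every} $\alpha\in\mathbb{R}$, and it is the one conceptual point worth flagging, everything else being bookkeeping.

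Concretely, I would first write out
\[
  \dimx\, E_\gamma(u) = \frac{\dimx}{2}\Norm{\nabla u}^2 - \frac{\dimx}{2} k^2 \Norm{u}^2 + \dimx\,\gamma\,\Norm{\L u}^2,
\]
and denote by $R(u)$ the left-hand side of \eqref{eq:IntRellich}, namely the sum of the bulk term $\int_\Omega 2\Re\qp{\qp{\vec{x}\cdot\nabla u}\overline{\L u}}\,\d x$, the indefinite term $(\dimx-2)\Norm{\nabla u}^2 - \dimx k^2\Norm{u}^2$, and the boundary term $B_R(u)$ displayed in \eqref{eq:IntRellich}. By Proposition~\ref{cor:intmorawetz} one has $R(u)=0$; for a function that is merely sufficiently smooth (as assumed here) this is immediate by integrating the pointwise identity \eqref{rellich_id} over $\Omega$ and applying the divergence theorem, so no density argument is needed at this stage.

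Next I would form $\dimx\, E_\gamma(u) - \alpha R(u)$ and collect like terms. The coefficient of $\Norm{\nabla u}^2$ becomes $\tfrac{\dimx}{2} - \alpha(\dimx-2)$; the coefficient of $k^2\Norm{u}^2$ becomes $-\tfrac{\dimx}{2} + \alpha\dimx = \qp{\alpha - \tfrac12}\dimx$; the coefficient of $\gamma\Norm{\L u}^2$ stays at $\dimx$; the cross term $-\alpha\int_\Omega 2\Re\qp{\qp{\vec{x}\cdot\nabla u}\overline{\L u}}\,\d x$ appears; and the boundary contribution is precisely $-\alpha B_R(u)$. Grouping the volume terms yields $E_{\gamma,\Omega}(\dimx,\alpha;u)$ and the boundary term yields $E_{\gamma,\partial\Omega}(\dimx,\alpha;u)$, which is \eqref{dimxEgamma}.

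I expect no genuine obstacle: the whole argument is the tracking of these five coefficients together with the observation that one may add any multiple of a vanishing identity. The only place a reader should pause is the regularity under which \eqref{eq:IntRellich} is available — pointwise versus in a trace/duality sense — but since the lemma hypothesises a smooth $u$, the pointwise route settles this cleanly, and the general statement for $u\in\Vc$ then follows by the density argument already invoked for Proposition~\ref{cor:intmorawetz}.
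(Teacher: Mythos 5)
Your proposal is correct and is essentially identical to the paper's own proof: the authors likewise multiply $E_\gamma(u)$ by $\dimx$ and add zero in the form of the integrated Rellich identity \eqref{eq:IntRellich} scaled by $-\alpha$, then collect coefficients exactly as you do. Your coefficient bookkeeping (in particular $-\tfrac{\dimx}{2}+\alpha\dimx=(\alpha-\tfrac12)\dimx$) and your remark on why the identity holds for every $\alpha$ both check out.
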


\begin{proof}
  Beginning with the definition of $E_\gamma(u)$, we {multiply both sides by $\dimx$} and add zero in the form of the identity \eqref{eq:IntRellich}, 
  scaled by $-\alpha$, to obtain
  \begin{multline*}
    \dimx  E_\gamma(u)
    =
    \dimx  \gamma \Norm{\L u}^2
    +
    \qp{\frac{\dimx}{2} - \alpha(\dimx-2)} \Norm{\nabla u}^2
    +
    \qp{\alpha - \frac{1}{2}} \dimx  k^2 \Norm{u}^2 
    -
    \alpha \int_\Omega 2 \Re\qp{\qp{\vec{x} \cdot \nabla u}  \overline{\L u}}    \d x
    \\
    -
    \alpha \int_{\partial \Omega} \qp{
      2 \Re\qp{\qp{\vec{x} \cdot \overline{\nabla u}} \qp{\nabla u \cdot \vec{n}}}
      -
      \norm{\nabla u}^2 \qp{\vec{x} \cdot \vec{n}}
      +
      k^2 \norm{u}^2 \qp{\vec{x} \cdot \vec{n}}
    }    \d S,
  \end{multline*}
  which is precisely \eqref{dimxEgamma}-\eqref{dimxEgammabdry}.
\end{proof}

\begin{remark}\label{choice_of_alpha}[Ensuring coercivity of the low-order bulk terms in \eqref{dimxEgammabulk}]
  Observe that $\alpha\in\mathbb{R}$ is a free parameter in the energy identity. By choosing $\alpha$ appropriately, we can ensure that the coefficients of the gradient and zeroth-order terms are strictly positive. {For $\dimx=1$ the gradient coefficient is $\tfrac12+\alpha$; for $\dimx=2$ it equals $1$. In both cases, choosing $\alpha>\tfrac12$ also makes the $k^2\|u\|^2$ coefficient $\dimx(\alpha-\tfrac12)$ positive. For $\dimx\ge 3$, taking}
  \[
    {\frac 12<\alpha<\frac{\dimx}{2(\dimx-2)}}
  \]
  {ensures both coefficients are positive.}
\end{remark}
{To control the mixed volume term in \eqref{dimxEgammabulk} we use the following simple estimate.}

\begin{lemma}[Control of Weighted Gradient Norm]
  \label{lem:weightedgrad}
  {Let $\Omega \subset \mathbb{R}^{\dimx}$ be a bounded domain and let $L=\mathrm{diam}(\Omega)$. Then, for any sufficiently smooth function $u\colon\Omega\to\mathbb{C}$ and any fixed $x_0\in\overline{\Omega}$,}
  \begin{equation}
    {\int_\Omega \big|(\vec{x}-x_0)\cdot \nabla u\big|^2   \d x  \le  L^2 \int_\Omega |\nabla u|^2   \d x.}
  \end{equation}
\end{lemma}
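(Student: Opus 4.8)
The plan is to prove the estimate pointwise and then integrate, using nothing more than the Cauchy--Schwarz inequality in $\mathbb{R}^{\dimx}$ (applied coordinatewise) together with the definition of the diameter. Writing $\vec{x}-x_0=(x_1-x_{0,1},\dots,x_{\dimx}-x_{0,\dimx})\in\mathbb{R}^{\dimx}$ and $\nabla u=(\partial_1 u,\dots,\partial_{\dimx}u)\in\mathbb{C}^{\dimx}$, the quantity $(\vec{x}-x_0)\cdot\nabla u=\sum_{j=1}^{\dimx}(x_j-x_{0,j})\,\partial_j u$ is a complex scalar formed by pairing a real vector with a complex one, so the first step is to record the bound
\[
  \big|(\vec{x}-x_0)\cdot\nabla u\big|^2
  \le \Big(\sum_{j=1}^{\dimx}|x_j-x_{0,j}|^2\Big)\Big(\sum_{j=1}^{\dimx}|\partial_j u|^2\Big)
  = |\vec{x}-x_0|^2\,|\nabla u|^2 ,
\]
valid at every point $\vec{x}\in\Omega$.

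The second step is to estimate the geometric factor. Since $x_0\in\overline{\Omega}$ and $\vec{x}\in\Omega\subset\overline{\Omega}$, the definition $L=\mathrm{diam}(\Omega)=\sup_{a,b\in\overline{\Omega}}|a-b|$ gives $|\vec{x}-x_0|\le L$ for all $\vec{x}\in\Omega$ (one should note $\mathrm{diam}(\Omega)=\mathrm{diam}(\overline{\Omega})$ so that allowing $x_0$ on the boundary is harmless). Substituting this into the pointwise inequality yields $\big|(\vec{x}-x_0)\cdot\nabla u\big|^2\le L^2|\nabla u|^2$ a.e. on $\Omega$.

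The final step is simply to integrate this pointwise inequality over $\Omega$, which is legitimate because $u$ is assumed sufficiently smooth (in particular $\nabla u\in L^2(\Omega)$), producing
\[
  \int_\Omega \big|(\vec{x}-x_0)\cdot\nabla u\big|^2\,\d x
  \le L^2\int_\Omega |\nabla u|^2\,\d x ,
\]
as claimed. There is no genuine obstacle here; the only point requiring a word of care is the coordinatewise application of Cauchy--Schwarz with a real weight vector against a complex gradient, and the observation that the diameter is unchanged under closure so that $x_0\in\overline{\Omega}$ (rather than $x_0\in\Omega$) still gives the bound $|\vec{x}-x_0|\le L$. This lemma will then be used, with $x_0$ chosen as in Lemma~\ref{lem:weightedgrad} and the Rellich identity of Proposition~\ref{cor:intmorawetz}, to absorb the mixed term $\alpha\int_\Omega 2\Re((\vec{x}\cdot\nabla u)\overline{\L u})\,\d x$ in \eqref{dimxEgammabulk} into the $\gamma\|\L u\|^2$ and $\|\nabla u\|^2$ contributions via Young's inequality.
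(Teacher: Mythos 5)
Your proof is correct and follows essentially the same route as the paper: a pointwise Cauchy--Schwarz bound $|(\vec{x}-x_0)\cdot\nabla u|\le \|\vec{x}-x_0\|\,\|\nabla u\|\le L\|\nabla u\|$, followed by squaring and integrating over $\Omega$. The extra remarks on the complex-vs-real pairing and on $\mathrm{diam}(\Omega)=\mathrm{diam}(\overline{\Omega})$ are harmless elaborations of the same argument.
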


\begin{proof}
  {By definition of the diameter, $\sup_{\vec{x}\in\Omega}\|\vec{x}-x_0\|\le L$ for any $x_0\in\overline{\Omega}$. Hence}
  \[
    {|(\vec{x}-x_0)\cdot \nabla u| \le \|\vec{x}-x_0\| \|\nabla u\| \le L \|\nabla u\|,}
  \]
  {and the stated inequality follows upon squaring and integrating over $\Omega$.}
\end{proof}

\begin{proposition}[Coercivity of Bulk Terms]
  Let $\Omega \subset \mathbb{R}^\dimx$ be a bounded domain such that $0 \in \Omega$, and let $L=\mathrm{diam}(\Omega)$ denote the diameter of $\Omega$. For any $\alpha \in \mathbb{R}$ and $\varepsilon_1>0$, the bulk terms in the energy identity \eqref{dimxEgamma} can be bounded from below as
  \begin{multline}
   E_{\gamma,\Omega}(\dimx,\alpha;u) 
    \ge
    \qp{\dimx \gamma - \frac{\alpha^2 L^2}{\varepsilon_1}} \Norm{\L u}^2
    +    
    \qp{\frac{\dimx}{2} - \alpha (\dimx -2) - \varepsilon_1}\Norm{\nabla u}^2
    +
    {\qp{\alpha - \frac{1}{2}} \dimx k^2} \Norm{u}^2.
  \end{multline}
\end{proposition}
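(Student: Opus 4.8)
The plan is to leave every term of the bulk energy $E_{\gamma,\Omega}(\dimx,\alpha;u)$ in \eqref{dimxEgammabulk} untouched except the single indefinite mixed-derivative integral
$$-\alpha \int_\Omega 2\Re\qp{\qp{\vec{x}\cdot\nabla u}\,\overline{\L u}}\,\d x,$$
and to estimate this term from below by $-\varepsilon_1\Norm{\nabla u}^2-\tfrac{\alpha^2 L^2}{\varepsilon_1}\Norm{\L u}^2$. Once this is done, adding the bound to the three remaining terms of \eqref{dimxEgammabulk} — the penalty $\dimx\gamma\Norm{\L u}^2$, the gradient term $\qp{\tfrac{\dimx}{2}-\alpha(\dimx-2)}\Norm{\nabla u}^2$ and the zeroth-order term $\qp{\alpha-\tfrac12}\dimx k^2\Norm{u}^2$, none of which is modified — and collecting the coefficients of $\Norm{\L u}^2$ and $\Norm{\nabla u}^2$ reproduces exactly the asserted inequality. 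If $\alpha=0$ the mixed term is absent and the statement is trivial, so we may assume $\alpha\neq 0$.

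For the estimate of the mixed term I would argue pointwise. By the Cauchy--Schwarz inequality in $\mathbb{R}^\dimx$ and the elementary bound $\norm{\Re z}\le\norm z$ applied to $z=(\vec{x}\cdot\nabla u)\overline{\L u}$,
$$-\alpha\,2\Re\qp{\qp{\vec{x}\cdot\nabla u}\,\overline{\L u}} \ge -2\norm{\alpha}\,\norm{\vec{x}\cdot\nabla u}\,\norm{\L u}.$$
Applying Young's inequality $2ab\le\delta a^2+\delta^{-1}b^2$ with $a=\norm{\vec{x}\cdot\nabla u}$, $b=\norm{\alpha}\,\norm{\L u}$ and the calibrated choice $\delta=\varepsilon_1/L^2$, then integrating over $\Omega$, gives
$$-\alpha\int_\Omega 2\Re\qp{\qp{\vec{x}\cdot\nabla u}\,\overline{\L u}}\,\d x \ge -\frac{\varepsilon_1}{L^2}\int_\Omega\norm{\vec{x}\cdot\nabla u}^2\,\d x - \frac{\alpha^2 L^2}{\varepsilon_1}\Norm{\L u}^2.$$
It remains only to dispose of $\int_\Omega\norm{\vec{x}\cdot\nabla u}^2\,\d x$, and this is precisely what Lemma~\ref{lem:weightedgrad} supplies: with $x_0=0$, which is admissible since $0\in\Omega$, one has $\int_\Omega\norm{\vec{x}\cdot\nabla u}^2\,\d x\le L^2\Norm{\nabla u}^2$. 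Substituting this into the previous display yields the desired lower bound $-\varepsilon_1\Norm{\nabla u}^2-\tfrac{\alpha^2 L^2}{\varepsilon_1}\Norm{\L u}^2$, and then \eqref{dimxEgamma}--\eqref{dimxEgammabdry} completes the proof.

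There is no genuine obstacle here: the two pieces of real content — the energy identity \eqref{dimxEgamma}--\eqref{dimxEgammabdry} and the weighted-gradient estimate of Lemma~\ref{lem:weightedgrad} — are already in place, and the proposition follows from a single application of Young's inequality. The only point needing a moment's care is the bookkeeping of constants: splitting the factor $\norm{\alpha}$ onto the $\norm{\L u}$ side of Young's inequality (rather than onto the $\norm{\vec{x}\cdot\nabla u}$ side) and choosing $\delta=\varepsilon_1/L^2$ are what make the diameter weight $L^2$ land on the $\Norm{\L u}^2$ term with exactly the coefficient $\alpha^2 L^2/\varepsilon_1$, while the $\Norm{\nabla u}^2$ loss is exactly $\varepsilon_1$, as stated.
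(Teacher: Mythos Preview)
Your proposal is correct and follows essentially the same route as the paper: isolate the mixed term, apply Young's inequality with the calibrated parameter $\delta=\varepsilon_1/L^2$, and invoke Lemma~\ref{lem:weightedgrad} with $x_0=0$ to absorb $\int_\Omega|\vec{x}\cdot\nabla u|^2\,\d x$. Your bookkeeping, placing the factor $|\alpha|$ on the $|\L u|$ side before applying Young's inequality, is in fact slightly tidier than the paper's write-up and lands exactly on the stated coefficients for all $\alpha\in\mathbb{R}$.
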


\begin{proof}
  We bound the last term in the bulk energy,
  \[
      E_{\gamma,\Omega}(\dimx,\alpha;u)
    =
    \dimx \gamma \Norm{\L u}^2
    +
    \qp{\frac{\dimx}{2} - \alpha (\dimx -2)} \Norm{\nabla u}^2
    +
    \qp{\alpha - \frac{1}{2}}\dimx k^2 \Norm{u}^2
     - 
    \alpha \int_\Omega 2 \Re\qp{(\vec{x}\cdot\nabla u) \overline{\L u}}  \d x.
  \]
  By Young’s inequality, for any $\varepsilon_2>0$,
  \[
    \bigg|\int_\Omega 2 \Re\qp{(\vec{x}\cdot\nabla u) \overline{\L u}}  \d x\bigg|
    \le
    \varepsilon_2 \int_\Omega  \norm{\vec{x}\cdot\nabla u}^2  \d x
    + \frac{1}{\varepsilon_2}\int_\Omega  \norm{\L u}^2  \d x.
  \]
  Using Lemma~\ref{lem:weightedgrad} and choosing {$\varepsilon_2=\varepsilon_1/L^2$}, we obtain
  \[
    \bigg|\int_\Omega 2 \Re\qp{(\vec{x}\cdot\nabla u) \overline{\L u}}  \d x\bigg|
    \le
    \varepsilon_1 \Norm{\nabla u}^2 + \frac{L^2}{\varepsilon_1}\Norm{\L u}^2.
  \]
  Hence
  \[
    - \alpha \int_\Omega 2 \Re\qp{(\vec{x}\cdot\nabla u) \overline{\L u}}  \d x
    \ge
    - |\alpha|\Big(\varepsilon_1 \Norm{\nabla u}^2 + \frac{L^2}{\varepsilon_1}\Norm{\L u}^2\Big)
     \ge 
    - \varepsilon_1 \Norm{\nabla u}^2 - \frac{\alpha^2 L^2}{\varepsilon_1}\Norm{\L u}^2,
  \]
  which, substituted into the expression for $E_{\gamma,\Omega}$, yields the stated bound.
\end{proof}

Now consider the boundary energy term \eqref{dimxEgammabdry}, which we rewrite as
\begin{align}\label{dimxEgamma_boundary}
    E_{\gamma,\partial\Omega}(\dimx,\alpha;u) 
    =&  \alpha \int_{\partial \Omega} \Big(
      \norm{\nabla u}^2  (\vec{x}\cdot\vec{n}) 
      -  k^2 \norm{u}^2  (\vec{x}\cdot\vec{n})
      \Big) \d S  -
    \alpha \int_{\partial \Omega}
      2 \Re\qp{(\vec{x}\cdot\overline{\nabla u}) (\nabla u\cdot\vec{n})}  \d S.
 \end{align}
Next, utilising Assumption \ref{ass_omega}, we estimate $E_{\gamma,\partial \Omega}$ as follows:
\begin{align}\label{dimxEgamma_boundary2}
    E_{\gamma,\partial \Omega}(\dimx,\alpha;u) 
    \ge&  \alpha L_0 \int_{\partial \Omega} |\nabla u|^2  \d S
    -  {\alpha}  L k^2 \int_{\partial \Omega} |u|^2  \d S
    -
    \alpha \int_{\partial \Omega}
      2 \Re \big((\vec{x}\cdot \overline{\nabla u}) (\nabla u\cdot \vec{n})\big)  \d S .
 \end{align}

{First consider the case when $u$ exactly satisfies the boundary condition \eqref{bc_section4}.}
With the notation of Proposition~\ref{the:boundarymorawetz} we have
\begin{equation}\label{eq:M0_identity_strong}
  - \int_\Omega 2 \Re \big(\overline{M_0 u} \L u\big) \d x
   +  2k^2\beta \int_{\partial\Omega} |u|^2 \d S  = 0,
\end{equation}
where $M_0 u := -ik\beta u$.
Adding the null quantity \eqref{eq:M0_identity_strong} to the right-hand side of \eqref{dimxEgamma_boundary2} and using \eqref{bc_section4} gives
\begin{align}\label{dimxEgamma_boundary3}
E_{\gamma,\partial \Omega}(\dimx,\alpha;u) 
    \ge & \alpha L_0 \int_{\partial \Omega} |\nabla u|^2  \d S
     +  \bigl(2\beta - L\bigr) k^2 \int_{\partial \Omega} |u|^2  \d S \nonumber\\
 & - 
    \alpha \int_{\partial \Omega}
      2 \Re \big((\vec{x}\cdot \overline{\nabla u}) (ik u)\big)  \d S
     -  \int_\Omega 2 \Re \big(\overline{M_0 u} \L u\big) \d x .
\end{align}
Using Assumption~\ref{ass_omega} and Young’s inequality,
\begin{equation}\label{dimxEgamma_boundary_inder_1}
\begin{aligned}
     \alpha \int_{\partial \Omega}
      2 \Re \big((\vec{x}\cdot \overline{\nabla u}) (ik u)\big)  \d S
      &\le \alpha \int_{\partial \Omega} 2L  |\nabla u|  k|u|  \d S
      \leq
      \frac{\alpha L_0}{2}\int_{\partial \Omega} |\nabla u|^2  \d S
       +  \frac{2\alpha L^2}{L_0} k^2 \int_{\partial \Omega} |u|^2  \d S .
\end{aligned}
\end{equation}
Furthermore, by Young’s inequality (for any $\varepsilon_3>0$),
\begin{equation}\label{dimxEgamma_boundary_inder_2}
\begin{aligned}
     \int_\Omega 2 \Re \big(\overline{M_0 u} \L u\big) \d x
      &\le \int_\Omega 2 \beta k  |u|  |\L u|  \d x 
      \le \varepsilon_3  k^2 \int_{\Omega} |u|^2  \d x
       +  \frac{\beta^2}{\varepsilon_3}\int_{\Omega} |\L u|^2  \d x .
\end{aligned}
\end{equation}

{Combining \eqref{dimxEgamma_boundary3}-\eqref{dimxEgamma_boundary_inder_2} with the bulk estimate and an appropriate choice of parameters $(\alpha,\beta,\varepsilon_1,\varepsilon_3)$ yields the boundary control needed for the global coercivity. This leads to the theorem stated next.}

\begin{theorem}\label{Th_coerc_gamma}{[Coercivity of the energy $E_{\gamma}$]}
Let $\Omega \subset \mathbb{R}^{{\dimx}}$ be a bounded domain such that $0 \in \Omega$, and let $L=\mathrm{diam}(\Omega)$. Assume that Assumption~\ref{ass_omega} holds. Then, for any $\alpha>\tfrac12$, any $\beta>0$, and any $\varepsilon_1>0$, the following estimate holds for all sufficiently smooth $u$ satisfying the boundary condition \eqref{bc_section4}:
\begin{equation}
\label{coerc_1}
\begin{split}
{\dimx} E_{\gamma}(u) 
 \ge  & 
\Bigg({\dimx} \gamma  -  \frac{\alpha^2 L^2}{\varepsilon_1}  -  \frac{2 \beta^2}{{\big(\alpha-\tfrac12\big) \dimx}} \Bigg)  \|\L u\|^2
 +  \Big(\tfrac{{\dimx}}{2} - \alpha({\dimx}-2) - \varepsilon_1\Big) \|\nabla u\|^2
 +  \tfrac12 \big(\alpha-\tfrac12\big) {\dimx} k^2 \|u\|^2
\\
& +  \frac{\alpha L_0}{2} \|\nabla u\|_{\partial\Omega}^2
 +  \Big(2\beta - \alpha L - \frac{2\alpha L^2}{L_0}\Big) k^2 \|u\|_{\partial\Omega}^2.
\end{split}
\end{equation}
Furthermore, there exist positive constants $c_0,c_1$ and $\gamma_0$, {independent of $k$}, such that for all $\gamma\ge \gamma_0$,
\begin{equation}
\label{coerc_c}
\begin{split}
E_{\gamma}(u) 
 \ge  & 
c_0 \Big( \|\L u\|^2 + \|\nabla u\|^2 + k^2 \|u\|^2 \Big)
 + 
c_1 \Big( \int_{\partial \Omega} |\nabla u|^2 \d S  +  k^2 \int_{\partial \Omega} |u|^2 \d S \Big).
\end{split}
\end{equation}
\end{theorem}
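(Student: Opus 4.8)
The plan is to establish the explicit lower bound \eqref{coerc_1} first and then read off \eqref{coerc_c} by an ordered choice of the free parameters $\alpha,\beta,\varepsilon_1,\gamma$. For \eqref{coerc_1} I would start from the Energy Identity with Penalty \eqref{dimxEgamma}, which gives $\dimx E_\gamma(u) = E_{\gamma,\Omega}(\dimx,\alpha;u) + E_{\gamma,\partial\Omega}(\dimx,\alpha;u)$ for every $\alpha\in\mathbb{R}$. The bulk part is handled by the Coercivity of Bulk Terms proposition: a Young inequality applied to the cross term $\int_\Omega 2\Re((\vec{x}\cdot\nabla u)\overline{\L u})\,\d x$ together with Lemma~\ref{lem:weightedgrad} bounds $E_{\gamma,\Omega}$ from below by $(\dimx\gamma-\alpha^2L^2/\varepsilon_1)\|\L u\|^2 + (\tfrac{\dimx}{2}-\alpha(\dimx-2)-\varepsilon_1)\|\nabla u\|^2 + (\alpha-\tfrac12)\dimx k^2\|u\|^2$. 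For the boundary part I would invoke Assumption~\ref{ass_omega} to reach \eqref{dimxEgamma_boundary2}, then add the null quantity \eqref{eq:M0_identity_strong} coming from the low-order Morawetz identity of Proposition~\ref{the:boundarymorawetz}, and use the impedance condition \eqref{bc_section4} to replace $\nabla u\cdot\vec{n}$ by $iku$ in the remaining cross term, as in \eqref{dimxEgamma_boundary3}.

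The one structural point worth highlighting is that the Morawetz null quantity reintroduces a bulk contribution $-\int_\Omega 2\Re(\overline{M_0 u}\,\L u)\,\d x$; this is absorbed via the Young inequality \eqref{dimxEgamma_boundary_inder_2} at the cost of $\varepsilon_3 k^2\|u\|^2 + (\beta^2/\varepsilon_3)\|\L u\|^2$, while the boundary cross term is absorbed via \eqref{dimxEgamma_boundary_inder_1} at the cost of $\tfrac{\alpha L_0}{2}\|\nabla u\|_{\partial\Omega}^2$ and a multiple of $k^2\|u\|_{\partial\Omega}^2$. Choosing $\varepsilon_3 = \tfrac{\dimx}{2}(\alpha-\tfrac12)$ uses up exactly half of the bulk $k^2\|u\|^2$ coefficient and produces the term $-\tfrac{2\beta^2}{(\alpha-\tfrac12)\dimx}\|\L u\|^2$ that appears in \eqref{coerc_1}; collecting the surviving contributions term by term then yields \eqref{coerc_1}.

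For \eqref{coerc_c} the key observation is that $L=\mathrm{diam}(\Omega)$ and $L_0$ are fixed geometric quantities, so it suffices to choose the parameters in the order $\alpha$, then $\varepsilon_1$, then $\beta$, then $\gamma_0$, so that every coefficient on the right of \eqref{coerc_1} becomes strictly positive and independent of $k$. Concretely: take $\alpha>\tfrac12$ — and additionally $\alpha<\tfrac{\dimx}{2(\dimx-2)}$ when $\dimx\ge 3$, as in Remark~\ref{choice_of_alpha} — so that the $k^2\|u\|^2$ and $\|\nabla u\|_{\partial\Omega}^2$ coefficients are positive; then $\varepsilon_1>0$ small enough that $\tfrac{\dimx}{2}-\alpha(\dimx-2)-\varepsilon_1>0$; then $\beta$ large enough that $2\beta-\alpha L-\tfrac{2\alpha L^2}{L_0}>0$; and finally $\gamma_0$ large enough that $\dimx\gamma_0 > \tfrac{\alpha^2 L^2}{\varepsilon_1}+\tfrac{2\beta^2}{(\alpha-\tfrac12)\dimx}$. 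Dividing \eqref{coerc_1} by $\dimx$ and letting $c_0$ and $c_1$ be $\tfrac1{\dimx}$ times the minima of the relevant (bulk, respectively boundary) coefficients gives \eqref{coerc_c} with $c_0,c_1,\gamma_0$ depending only on $\dimx,L,L_0$, hence independent of $k$.

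The main obstacle is not any individual estimate but the bookkeeping of this parameter hierarchy: $\beta$ must be fixed before $\gamma_0$ because it enters the $\|\L u\|^2$ coefficient through the Morawetz cross term, and the $k$-independence of $\gamma_0$ rests on the fact that the Rellich identity \eqref{eq:IntRellich} and the low-order Morawetz identity produce no genuinely $k$-dependent constants once the $k^2$-weighting in the norm of $\Hc$ is accounted for. I would also verify the degenerate cases $\dimx=1$ (boundary integrals are endpoint evaluations, gradient coefficient $\tfrac12+\alpha$) and $\dimx=2$ (gradient coefficient identically $1$) separately, as flagged in Remark~\ref{choice_of_alpha}.
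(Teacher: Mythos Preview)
Your proposal is correct and follows essentially the same route as the paper: the paper likewise combines the bulk estimate from the Coercivity of Bulk Terms proposition with the boundary chain \eqref{dimxEgamma_boundary2}--\eqref{dimxEgamma_boundary_inder_2}, and then states that an appropriate choice of $(\alpha,\beta,\varepsilon_1,\varepsilon_3)$ yields \eqref{coerc_1}. Your explicit identification $\varepsilon_3=\tfrac{\dimx}{2}(\alpha-\tfrac12)$ and the ordered parameter selection $\alpha\to\varepsilon_1\to\beta\to\gamma_0$ for \eqref{coerc_c} merely make precise what the paper leaves implicit.
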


\subsubsection*{Proof of Theorem 3.1}

{We now consider the case where $u$ satisfies the impedance condition \eqref{bc_section4} \emph{weakly}} and examine the coercivity of $F_\gamma(u):=\mathcal{F}_\gamma(u)\big|_{f=0}= \, \Re \, \mathcal{F}_{\gamma,\,  \C \, } (v) \big|_{f=0}$.

We have
\begin{equation}\label{dimxFgamma}
      \dimx  F_\gamma(u)  =  F_{\gamma_1}(u)  +  F_{\gamma_2}(u),
\end{equation}
where $ F_{\gamma_1}(u) :=
E_{\gamma_1,\Omega}(u)+E_{\gamma_1,\partial\Omega}(u)$ and
$F_{\gamma_2}(u) := \dimx
\gamma_2\int_{\partial\Omega}\Bigl|\partial_{\mathbf n}u-ik u\Bigr|^2
\d S.$ {The bulk estimate for $E_{\gamma_1,\Omega}$ is identical to
  that for $E_{\gamma,\Omega}$ from the strongly imposed case with
  $\gamma$ replaced by $\gamma_1$.}
 
 For the boundary term
$E_{\gamma_1,\partial\Omega}$ we start from
\eqref{dimxEgamma_boundary2} (with $\gamma=\gamma_1$) and split the
normal derivative:
\begin{align*}
-\alpha\int_{\partial\Omega}2 \Re \big((\vec x \cdot \overline{\nabla u})(\nabla u \cdot \vec n)\big) \d S
&= -\alpha\int_{\partial\Omega}2 \Re \big((\vec x \cdot \overline{\nabla u}) (\partial_{\mathbf n}u-ik u)\big) \d S 
-\alpha\int_{\partial\Omega}2 \Re \big((\vec x \cdot \overline{\nabla u}) ik u\big) \d S.
\end{align*}
By Cauchy-Schwarz/Young's inequality, $| \vec x \cdot \nabla u |\le L|\nabla u|$, so for any $\varepsilon_3>0$,
\[
-\alpha\int_{\partial\Omega}2 \Re \big((\vec x \cdot \overline{\nabla u}) (\partial_{\mathbf n}u-ik u)\big) \d S
 \ge 
-\alpha L^2\varepsilon_3 \|\nabla u\|^2_{\partial\Omega} - \frac{\alpha}{\varepsilon_3} \|\partial_{\mathbf n}u-ik u\|^2_{\partial\Omega}.
\]
Likewise, for any $\varepsilon_2>0$,
\[
-\alpha\int_{\partial\Omega}2 \Re \big((\vec x \cdot \overline{\nabla u}) ik u\big) \d S
 \ge 
-\alpha\varepsilon_2 \|\nabla u\|^2_{\partial\Omega} - \frac{\alpha L^2}{\varepsilon_2} k^2\|u\|^2_{\partial\Omega}.
\]
{To handle the bulk coupling that appears in the strong case via $M_0$, we estimate directly by Young’s inequality: for any $\beta>0$ and any $\varepsilon_4>0$,}
\[
\int_\Omega 2 \Re \big(\overline{(-ik\beta u)} \L u\big) \d x
 \le  \varepsilon_4 k^2\|u\|^2  +  \frac{\beta^2}{\varepsilon_4} \|\L u\|^2,
\]
{which yields the same volume contributions as in the strong case after redistribution into the bulk bound.}

Collecting the above bounds with \eqref{dimxEgamma_boundary2} gives
\begin{equation}\label{coerc_1_wbc}
\begin{split}
E_{\gamma_1,\partial\Omega}(u)
 \ge & \alpha L_0 \|\nabla u\|^2_{\partial\Omega}
 +  (2\beta-\alpha L) k^2\|u\|^2_{\partial\Omega}
 - \alpha L^2\varepsilon_3 \|\nabla u\|^2_{\partial\Omega}
 - \frac{\alpha}{\varepsilon_3} \|\partial_{\mathbf n}u-ik u\|^2_{\partial\Omega}\\
&- \alpha\varepsilon_2 \|\nabla u\|^2_{\partial\Omega}
 - \frac{\alpha L^2}{\varepsilon_2} k^2\|u\|^2_{\partial\Omega}
 - \frac{(\alpha-\tfrac12) \dimx}{2} k^2\|u\|^2
 - \frac{2\beta^2}{(\alpha-\tfrac12) \dimx} \|\L u\|^2.
\end{split}
\end{equation}
Adding $F_{\gamma_2}(u)$ from \eqref{dimxFgamma} yields the net
boundary-residual coefficient ${\big(\dimx \gamma_2 -
  \tfrac{\alpha}{\varepsilon_3}\big) \|\partial_{\mathbf n}u-ik
  u\|^2_{\partial\Omega}.}$ Combining the bulk estimate (with $\gamma
= \gamma_1$) and \eqref{coerc_1_wbc} we obtain the result of Theorem
\ref{theorem_coerc_wbc}.

{One admissible parameter choice is obtained by taking $\alpha\in(\tfrac12,\frac{\dimx}{2(\dimx-2)})$ (or any $\alpha>\tfrac12$ if $\dimx\in\{1,2\}$), then fixing small $\varepsilon_1,\varepsilon_2,\varepsilon_3>0$ so that the boundary coefficients are positive, choosing $\beta$ to make $2\beta-\alpha L - \frac{\alpha L^2}{\varepsilon_2}>0$, and finally taking $\gamma_1,\gamma_2$ above the indicated thresholds.}

\begin{remark}[Explicit calculation of the constants]
 A {  convenient} choice of the constants {  appearing} in front of the boundary norms is  
\[
   \varepsilon_2=\frac{L_0}{4}, \qquad \varepsilon_3=\frac{L_0}{4L^2}.
   \]
   With these values one obtains $\alpha L_0 - \alpha L^2
     \varepsilon_3 - \alpha \varepsilon_2 = \frac{\alpha L_0}{2}, $ $2
     \beta - \alpha L - \frac {\alpha L^2 } {\varepsilon_2} = 2 \beta
     - \alpha L - \frac{4 \alpha L^2}{L_0},$ $\dimx
     \gamma_2-\frac{\alpha}{\varepsilon_3} = \dimx \gamma_2- \frac{4
       \alpha L^2}{L_0} .$ { Coercivity requires these three
       quantities to be positive; in particular
\[
   \beta > \frac{\alpha L}{2} + \frac{2\alpha L^2}{L_0}, \qquad \gamma_2  >  \frac{4 \alpha L^2}{\dimx L_0}.
\]
}
Other choices are possible; the {  corresponding} constants in the coercivity bounds can be traced explicitly from the preceding estimates.
\end{remark}
\begin{remark}[Constants if  $\Omega$ is a square or a cube]
\label{remark_square}
{  To give a sense of scale,  in the simple case where $\Omega$ is a square in $\dimx = 2$  or a cube in $\dimx = 3$, of diameter $L$ centred at the 
origin, then $L_0$ is the radius of the inscribed circle that equals  $L_0 = \frac{L}{2\sqrt{\dimx}}$. Choosing the parameters 
$\varepsilon_2$, $\varepsilon_3$ as above and the rest of the parameters appearing in \eqref{coerc_2}, namely
$\alpha$, $\varepsilon_1$, $\beta$, $\gamma_1$ and $\gamma_2$, as follows,
yields the coercivity estimates:

\paragraph{$\dimx = 2$:} Parameters:
\[
   \alpha = 1, \quad \varepsilon_1 = \tfrac{1}{2}, \quad
   \varepsilon_2 = \tfrac{L_0}{4}, \quad
   \varepsilon_3 = \tfrac{L_0}{4L^2}, \quad
   \beta = 6.2 L, \quad
   \gamma_1 = 39.5 L^2, \quad
   \gamma_2 = 5.7 L.
\]
Then
\begin{align*}
  2 F_{\gamma} (u) \ge\ & 0.12 L^2 \Norm{\L  u}^2 + \frac{1}{2}\Norm{\nabla u}^2 + \frac{1}{2}k^2 \Norm{u}^2 \\
   & + \frac{L}{4\sqrt{2}} \Norm{\nabla u}_{\partial \Omega}^2 + [12.4 - (1 + 8\sqrt{2})] L k^2 \Norm{u}_{\partial \Omega}^2
      +  (11.4 - 8\sqrt{2})L \Norm{\partial_{\mathbf n}u - iku}^{2}_{\partial\Omega} .
\end{align*}

\paragraph{$\dimx = 3$:} Parameters:
\[
   \alpha = 1, \quad \varepsilon_1 = \tfrac{1}{4}, \quad
   \varepsilon_2 = \tfrac{L_0}{4}, \quad
   \varepsilon_3 = \tfrac{L_0}{4L^2}, \quad
   \beta = 7.5 L, \quad
   \gamma_1 = 26.5 L^2, \quad
   \gamma_2 = 5 L.
\]
Then
\begin{align*}
  3 F_{\gamma} (u) \ge\ & \frac{1}{2} L^2 \Norm{\L  u}^2 + \frac{1}{4}\Norm{\nabla u}^2 + \frac{3}{4}k^2 \Norm{u}^2 \\
   & + \frac{L}{4\sqrt{3}} \Norm{\nabla u}_{\partial \Omega}^2 + [15 - (1 + 8\sqrt{3})] L k^2 \Norm{u}_{\partial \Omega}^2
      +  (15 - 8\sqrt{3})L \Norm{\partial_{\mathbf n}u - iku}^{2}_{\partial\Omega} .
\end{align*}
In both cases, all coefficients of the boundary and interior terms are strictly positive, confirming the coercivity of  $F_\gamma(u)$
for these parameter choices.
}
\end{remark}

\section{Variational discretisation methods}
\label{sec:numerics}

This section describes the discrete formulations used in our numerical examples. We first consider a conforming finite element discretisation of the augmented variational principle introduced in Section~\ref{sec:variational}. The neural-network formulation is discussed in the following subsection. The purpose of the experiments is verification and illustration, a systematic computational comparison is left for future work.

\subsection{Finite element approximation}

\subsubsection*{Discrete formulation}

Let $V_h\subset \Vc$ be a finite-dimensional conforming space. Since the augmented formulation contains the bulk and the impedance residual, all terms are well defined if, for example, $V_h\subset H^2(\Omega)$. In the computations below we use $H^2$-conforming Argyris elements, although other $C^1$ finite element spaces could also be used.

The finite element approximation is obtained by restricting the stationary formulation associated with $\mathcal{F}_{\gamma,\C}$ to $V_h$: find $u_h\in V_h$ such that
\begin{equation}
\label{eq:fem_augmented_form}
\mathcal{A}_{\gamma,\C}(u_h,v_h)
=
\int_{\Omega} f \overline{\qp{v_h+2\gamma_1\L v_h}} \d x,
\qquad
\text{for all } v_h\in V_h,
\end{equation}
where $\mathcal{A}_{\gamma,\C}$ is the sesquilinear form defined in \eqref{Aform_reg_wbc}. 

\begin{proposition}[Well-posedness of the finite element problem]
\label{prop:fem_wellposed}
Assume that the hypotheses of Theorem~\ref{theorem_coerc_wbc} hold and let $V_h\subset\Vc$. If $\gamma_1\ge \gamma_{1,0}$ and $\gamma_2\ge \gamma_{2,0}$, then the finite element problem \eqref{eq:fem_augmented_form} admits a unique solution $u_h\in V_h$.
\end{proposition}

Indeed, coercivity of the real part of $\mathcal{A}_{\gamma,\C}$ on $\Vc$ implies coercivity on the subspace $V_h$, while boundedness follows directly from the definition of $\Vc$. The result is therefore an immediate finite-dimensional consequence of the complex Lax--Milgram theorem.

The same argument gives the usual quasi-optimality estimate in the augmented norm.

\begin{theorem}[Quasi-optimality in the augmented norm]
\label{thm:fem_quasioptimal}
Assume that the hypotheses of Theorem~\ref{theorem_coerc_wbc} hold, and let
$\gamma_1\ge \gamma_{1,0}$ and $\gamma_2\ge \gamma_{2,0}$. Let
$V_h\subset\Vc$ be a conforming finite element space. If $u\in\Vc$ denotes
the solution of \eqref{Helm_PDE} and $u_h\in V_h$ solves the discrete problem
\eqref{eq:fem_augmented_form}, then
\begin{equation}
\label{eq:fem_quasioptimal}
\|u-u_h\|_{\Vc}
\le
\frac{M_\gamma}{2\tilde c}
\inf_{w_h\in V_h}\|u-w_h\|_{\Vc},
\end{equation}
where $\tilde c$ is the coercivity constant in
\eqref{Energy_H_gamma_Re_4_wbc}, and $M_\gamma$ is the continuity constant of
$\mathcal{A}_{\gamma,\C}$ on $\Vc\times\Vc$.
\end{theorem}

This estimate is a stability and best-approximation result in the 
norm of $\Vc$. It should not be interpreted as a pollution-free estimate in  $L^2(\Omega)$ or $H^1(\Omega)$ norms. For this reason, the verification tests below also report relative $L^2$ errors, relative wavenumber-weighted $H^1$ errors and residual diagnostics.

In the finite element implementation, the complex-valued problem is written as a mixed real-valued problem for $(u_R,u_I)$. All integrals are assembled in Firedrake using quadrature degree $12$. Unless otherwise stated, the resulting sparse systems are solved by a direct LU factorisation using MUMPS through PETSc.

For the numerical tests we use a mesh-scaled boundary-residual weight
\begin{equation}
\gamma_{2,h}=\frac{\tau}{h},
\end{equation}
where $h$ is the mesh parameter and $\tau>0$ is fixed. Thus the boundary contribution is evaluated with $\gamma_2=\gamma_{2,h}$. This is a discrete stabilisation choice used to balance the boundary residual with the interior residual on the finite element space. Unless otherwise stated, we take $\gamma_1=2$ and $\tau=50$.

\subsubsection*{Manufactured-solution verification}

We first consider a manufactured-solution test on the unit square $\Omega=(0,1)^2$. The exact solution is
\begin{equation}
u_{\mathrm{ex}}(x,y)
=
\exp\qp{
ik\qp{(x-\tfrac12)^2+(y-\tfrac12)^2}
}.
\end{equation}
On each side of the square, the outward normal derivative satisfies
\begin{equation}
\partial_{\mathbf n}u_{\mathrm{ex}}-iku_{\mathrm{ex}}=0,
\qquad
\text{on } \partial\Omega.
\end{equation}
The corresponding right-hand side is defined by
\[
f_{\mathrm{ex}}=-\Delta u_{\mathrm{ex}}-k^2u_{\mathrm{ex}},
\]
namely
\begin{equation}
f_{\mathrm{ex}}(x,y)
=
\qp{
-4ik
+4k^2\qp{(x-\tfrac12)^2+(y-\tfrac12)^2}
-k^2
}
u_{\mathrm{ex}}(x,y).
\end{equation}

We report the relative $L^2(\Omega)$ error, the relative
$H^1_k(\Omega)$ error, the relative bulk residual
\begin{equation}
\frac{
\|-\Delta u_h-k^2u_h-f_{\mathrm{ex}}\|_{L^2(\Omega)}
}{
\|f_{\mathrm{ex}}\|_{L^2(\Omega)}
},
\end{equation}
and the relative impedance residual
\begin{equation}
\frac{
\|\partial_{\mathbf n}u_h-iku_h\|_{L^2(\partial\Omega)}
}{
\|\partial_{\mathbf n}u_h\|_{L^2(\partial\Omega)}
+k\|u_h\|_{L^2(\partial\Omega)}
}.
\end{equation}

\begin{table}[htbp]
\centering
\begin{tabular}{rrrrrr}
\hline
$h$ & DoFs & rel. $L^2$ & rel. $H^1_k$ & rel. bulk res. & rel. bnd. res. \tabularnewline
\hline
0.0625 & 5068 & 1.798e-03 & 2.150e-03 & 1.064e-02 & 5.431e-04 \tabularnewline
0.03125 & 19340 & 7.198e-06 & 2.533e-05 & 7.107e-04 & 2.806e-05 \tabularnewline
0.015625 & 75532 & 4.368e-08 & 6.698e-07 & 4.371e-05 & 1.027e-06 \tabularnewline
0.0078125 & 298508 & 2.154e-09 & 1.991e-08 & 2.719e-06 & 3.405e-08 \tabularnewline
\hline
\end{tabular}
\caption{Manufactured-solution mesh refinement for the augmented Helmholtz formulation at $k=25$.}
\label{tab:fe-mms-refinement}
\end{table}

\begin{table}[htbp]
\centering
\begin{tabular}{rrrrrrr}
\hline
$k$ & $h$ & DoFs & rel. $L^2$ & rel. $H^1_k$ & rel. bulk res. & rel. bnd. res. \\
\hline
10 & 0.015625 & 75532 & 8.096e-10 & 1.150e-08 & 1.638e-06 & 1.687e-08 \\
25 & 0.015625 & 75532 & 4.368e-08 & 6.698e-07 & 4.371e-05 & 1.027e-06 \\
50 & 0.015625 & 75532 & 2.520e-05 & 3.501e-05 & 6.632e-04 & 2.790e-05 \\
100 & 0.015625 & 75532 & 2.024e-02 & 2.220e-02 & 1.106e-02 & 6.038e-04 \\
\hline
\end{tabular}
\caption{Manufactured-solution wavenumber sweep for the augmented Helmholtz formulation.}
\label{tab:fe-mms-k-sweep}
\end{table}

Table~\ref{tab:fe-mms-refinement} shows the refinement behaviour at fixed wavenumber $k=25$. The relative $L^2$ and $H^1_k$ errors decrease under mesh refinement, and the bulk and boundary residuals decrease consistently. Table~\ref{tab:fe-mms-k-sweep} reports a fixed-mesh wavenumber sweep with $h=2^{-6}$. As expected, the errors increase as the wavenumber grows on a fixed mesh.

\subsubsection*{Localised-source illustration}

We next consider a qualitative test with a smooth source concentrated near the centre of the domain:
\begin{equation}
f(x,y)
=
\exp\qp{
-\frac{(x-0.5)^2+(y-0.5)^2}{\varepsilon}
},
\qquad
\varepsilon=10^{-4}.
\end{equation}
The same Argyris discretisation and parameters $\gamma_1=2$ and $\tau=50$ are used. The computations are performed on $\Omega=(0,1)^2$ using a uniform triangular mesh with $h\approx 2^{-7}$. Figure~\ref{fig:argyris-localised-source} shows the real part of the computed solution for increasing wavenumber.

\begin{figure}[h!]
\centering
\begin{subfigure}{0.24\linewidth}
\includegraphics[width=\textwidth]{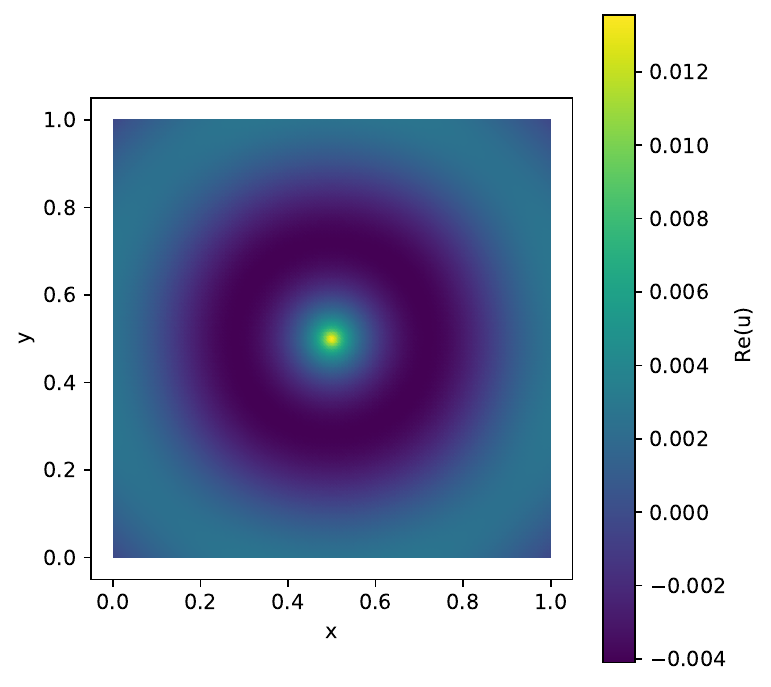}
\caption{$k=10$}
\end{subfigure}
\begin{subfigure}{0.24\linewidth}
\includegraphics[width=\textwidth]{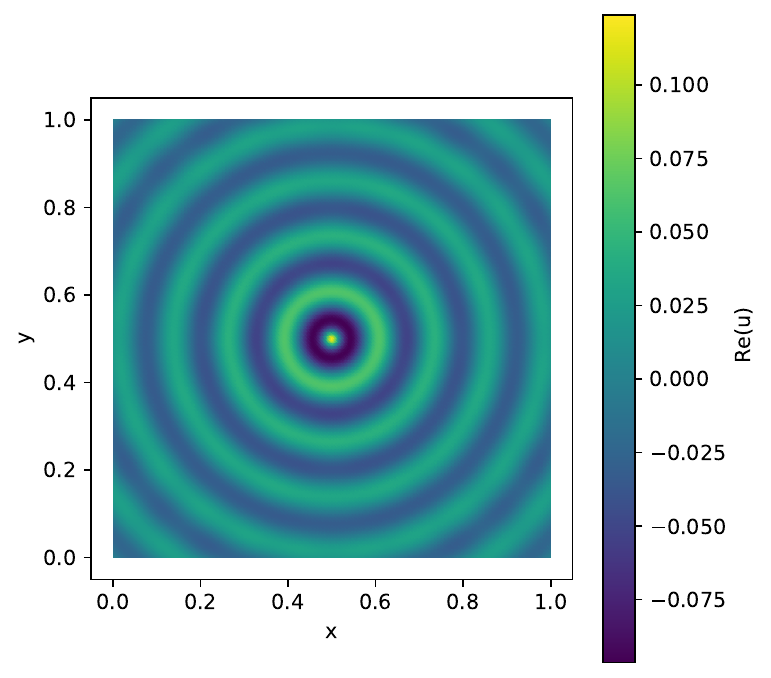}
\caption{$k=50$}
\end{subfigure}
\begin{subfigure}[b]{0.24\linewidth}
\includegraphics[width=\textwidth]{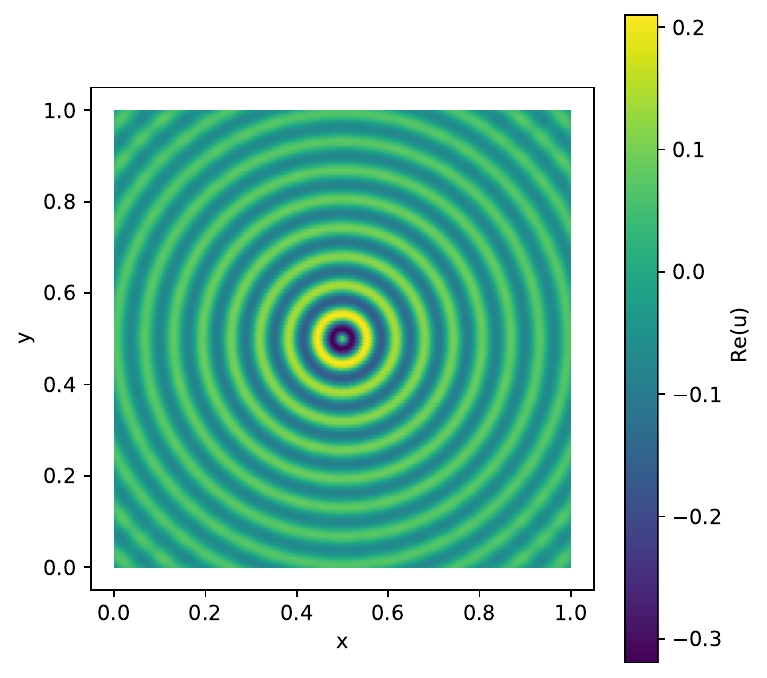}
\caption{$k=100$}
\end{subfigure}
\begin{subfigure}[b]{0.24\linewidth}
\includegraphics[width=\textwidth]{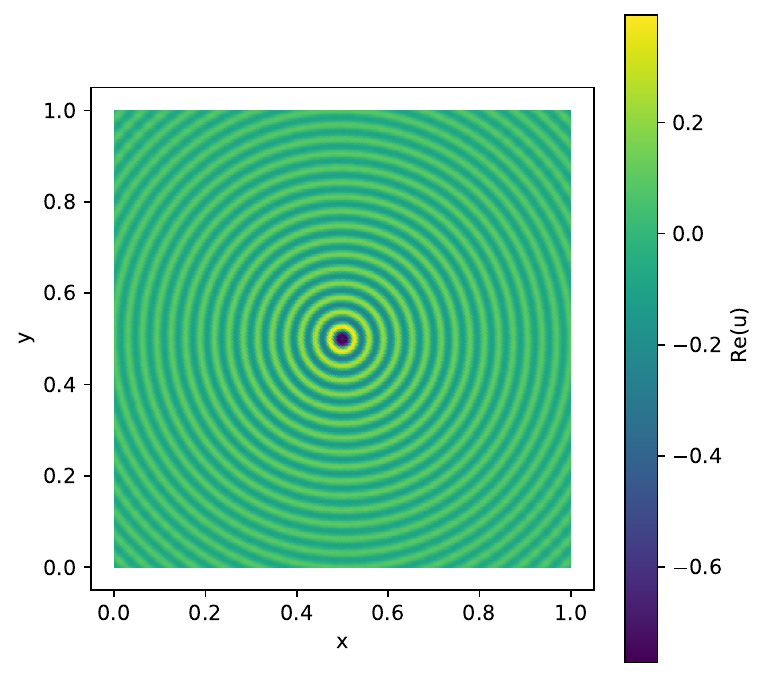}
\caption{$k=200$}
\end{subfigure}
\caption{Real part of the Argyris finite element solution on $\Omega=(0,1)^2$ with $h\approx 2^{-7}$ for $k\in\{10,50,100,200\}$.}
\label{fig:argyris-localised-source}
\end{figure}

\vspace{-5em}

\subsubsection*{Nonconvex scattering benchmark}
\label{subsec:nonconvex-scattering}

Finally, we include a nonconvex scattering benchmark. This example lies outside the star-shaped setting used in the coercivity proof, and is included as an illustrative stress test of the discrete formulation.

The computational domain is the unit square with a U-shaped sound-soft obstacle removed:
\begin{equation}
\Omega=(0,1)^2\setminus\overline{D_U},
\end{equation}
where
\begin{equation}
D_U
=
\qp{[0.38,0.45]\times[0.30,0.75]}
\cup
\qp{[0.55,0.62]\times[0.30,0.75]}
\cup
\qp{[0.38,0.62]\times[0.30,0.38]}.
\end{equation}
The geometry contains re-entrant corners and a cavity opening.

We prescribe the incident plane wave
\begin{equation}
u_{\mathrm{inc}}(x,y)
=
\exp\qp{ik d\cdot(x,y)},
\qquad
d=(0,-1),
\end{equation}
so that the wave is directed into the opening of the obstacle. The total field $u$ satisfies
\begin{equation}
-\Delta u-k^2u=0
\qquad
\text{in } \Omega.
\end{equation}
On the obstacle boundary $\Gamma_{\mathrm{obs}}:=\partial D_U$ we impose the sound-soft condition
\begin{equation}
u=0
\qquad
\text{on } \Gamma_{\mathrm{obs}}.
\end{equation}
On the exterior boundary $\Gamma_{\mathrm{out}}:=\partial(0,1)^2$ we impose the first-order absorbing condition on the scattered field. Equivalently, the total field satisfies
\begin{equation}
\partial_{\mathbf n}u-iku
=
\partial_{\mathbf n}u_{\mathrm{inc}}-iku_{\mathrm{inc}}
\qquad
\text{on } \Gamma_{\mathrm{out}}.
\end{equation}
In the implementation, the outer boundary residual is shifted by the known datum
\[
g_{\mathrm{out}}
:=
\partial_{\mathbf n}u_{\mathrm{inc}}-iku_{\mathrm{inc}},
\]
while the sound-soft condition on $\Gamma_{\mathrm{obs}}$ is imposed weakly through a mesh-scaled penalty term.

Figure~\ref{fig:u-obstacle-scattering} shows the augmented finite element solution for $k=40$ on a mesh with $h\approx0.045$. The incident wave enters the cavity from above, producing interference around the re-entrant corners and enhanced field intensity inside the U-shaped region.

\begin{figure}[htbp]
\centering
\includegraphics[width=\textwidth]{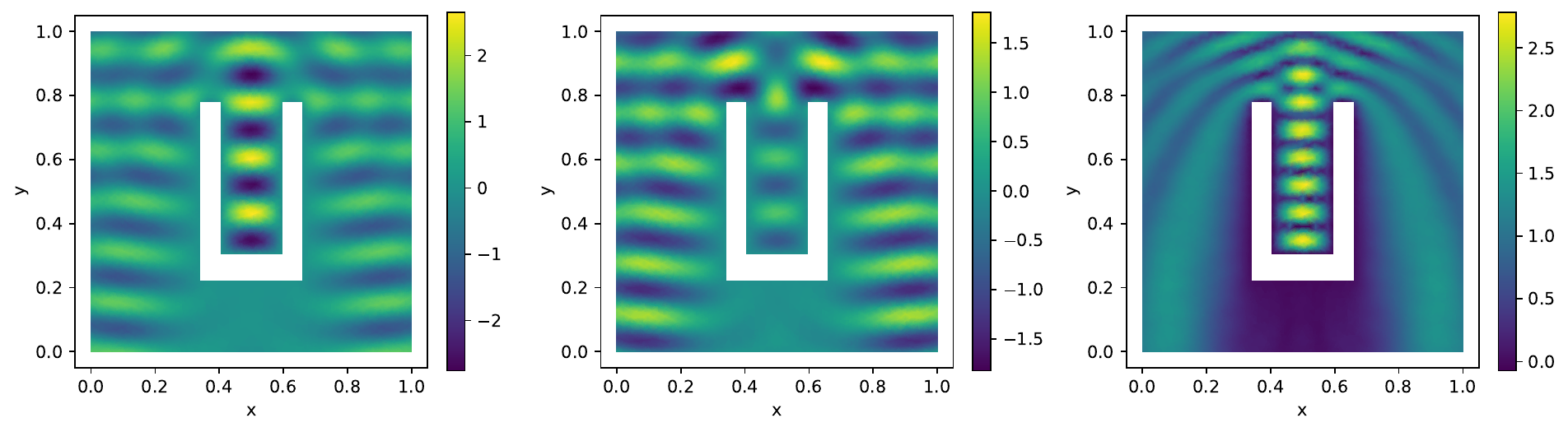}
\caption{Augmented finite element solution for the U-shaped sound-soft obstacle scattering problem with $k=40$ and $h\approx0.045$. The panels show the real, the imaginary part and amplitude of the computed total field.}
\label{fig:u-obstacle-scattering}
\end{figure}
\vspace{-4em}

\subsection{Neural network approximation}

\subsubsection*{Deep fixed point flux formulation}

We also consider a neural network discretisation of the augmented variational formulation. The main difference from the finite element method is that the impedance contribution is represented through a boundary flux variable rather than assembled directly into a conforming trial space.

We introduce a complex boundary variable $\lambda$ and write the weak Helmholtz equation as
\begin{equation}
\label{eq:nn_flux_weak_form}
\int_\Omega \nabla u\cdot\nabla\overline v \d x
-k^2\int_\Omega u\overline v \d x
-\int_{\partial\Omega}\lambda\overline v \d S
=
\int_\Omega f\overline v \d x .
\end{equation}
The impedance condition is encoded by the boundary relation
$\lambda=iku \qquad \text{on } \partial\Omega.$ At a fixed point, we
recover the impedance term appearing in the variational formulation.
For a fixed boundary flux $\lambda$, the primal network is obtained by
approximately minimising
\begin{equation}
\label{eq:nn_deep_uzawa_functional}
\begin{split}
\mathcal{J}_{\gamma,\lambda}(v)
&=
\frac12\int_\Omega\qp{|\nabla v|^2-k^2|v|^2}\d x
-\Re\int_\Omega f\overline v\d x
+\gamma_1\int_\Omega|\L v-f|^2\d x
\\
&\quad
+\gamma_2\int_{\partial\Omega}
|\partial_{\mathbf n}v-ikv|^2\d S
-\Re\int_{\partial\Omega}\lambda\overline v\d S .
\end{split}
\end{equation}
The last term supplies the weak flux contribution. Since this term is linear in $v$, it does not affect the coercivity of the quadratic part, see Theorem \ref{Theorem_SCVP_2}.

The complex-valued approximation is represented as
\[
u_\theta=u_{\theta,R}+iu_{\theta,I},
\]
where $u_{\theta,R}$ and $u_{\theta,I}$ are the two real-valued outputs of a SIREN network. Derivatives in $\L u_\theta$ and on $\partial\Omega$ are computed by automatic differentiation. On a fixed boundary collocation set $\{y_m\}_{m=1}^{M_\partial}\subset\partial\Omega$, the flux is updated by the damped {fixed point} step, compare to Theorem \ref{Theorem_SCVP_2} and 
\eqref{fixed_point_it_tilde},
\begin{equation}
\label{eq:nn_uzawa_update}
\lambda^{m+1}(y)
=
(1-\rho)\lambda^m(y)
+\rho\,ik\,u_{\theta^{m+1}}(y),
\qquad
y\in\partial\Omega,
\end{equation}
with $0<\rho\le 1$. Equivalently,
\[
\lambda_R^{m+1}
=
(1-\rho)\lambda_R^m-\rho k u_I^{m+1},
\qquad
\lambda_I^{m+1}
=
(1-\rho)\lambda_I^m+\rho k u_R^{m+1}.
\]

\subsubsection*{Implementation details}

All integrals in \eqref{eq:nn_deep_uzawa_functional} are approximated by Monte Carlo quadrature using independently sampled interior and boundary points, with the appropriate domain and boundary measure factors. Unless otherwise stated, we use a SIREN network with two outputs, width $96$, depth $4$, Adam optimisation and random seed $67$.

For the two-dimensional manufactured-solution tests we use $4096$ interior points and $2048$ boundary points per primal update. The Deep Fixed Point runs use $200$ outer iterations, $20$ primal Adam steps per outer iteration and a preliminary residual least-squares warm start of $500$ Adam steps all in single precision. The relaxation parameter is fixed at $\rho=0.2$.

As a baseline, we train a residual PINN with the same architecture, optimiser, sampling budget and approximately the same number of Adam updates. This baseline minimises the bulk residual and the impedance boundary residual directly, without the physical Helmholtz energy and without the flux update \eqref{eq:nn_uzawa_update}.

\subsubsection*{Manufactured-solution comparison}

We use the same two-dimensional manufactured solution as in the finite element verification test,
\[
u_{\mathrm{ex}}(x,y)
=
\exp\qp{
ik\qp{(x-\tfrac12)^2+(y-\tfrac12)^2}
},
\]
with $f_{\mathrm{ex}}=-\Delta u_{\mathrm{ex}}-k^2u_{\mathrm{ex}}$. The errors are evaluated on validation points not used in the corresponding training step. We report the relative $L^2(\Omega)$ error, the relative $H^1_k(\Omega)$ error, the relative bulk residual and the relative boundary residual when available.

\begin{table}[htbp]
\centering
\begin{tabular}{rlrrr}
\hline
$k$ & method & rel. $L^2$ & rel. $H^1_k$ & rel. bulk res. \\
\hline
10 & augmented & 2.552e-02 & 2.751e-02 & 2.244e-02 \\
10 & residual PINN & 6.503e-02 & 7.363e-02 & 4.564e-02 \\
\hline
25 & augmented & 5.036e-03 & 5.556e-03 & 3.583e-03 \\
25 & residual PINN & 2.193e-01 & 2.388e-01 & 6.820e-02 \\
\hline
50 & augmented & 1.226e-03 & 1.308e-03 & 9.853e-04 \\
50 & residual PINN & 5.825e-01 & 6.551e-01 & 3.293e-01 \\
\hline
100 & augmented & 1.370e-03 & 1.465e-03 & 1.061e-03 \\
100 & residual PINN & 9.621e-01 & 9.761e-01 & 8.796e-01 \\
\hline
\end{tabular}
\caption{Manufactured-solution wavenumber sweep comparing the augmented Deep Fixed Point formulation with a residual PINN baseline. }
\label{tab:nn-mms-k-sweep-augmented-vs-pinns}
\end{table}

Table~\ref{tab:nn-mms-k-sweep-augmented-vs-pinns} shows that the augmented formulation gives substantially smaller validation errors across this sweep. The comparison is merely a verification test for the proposed formulation not a complete performance study.

\subsubsection*{Three-dimensional obstacle scattering}
\label{subsec:nn-3d-obstacle-scattering}

We finally include a three-dimensional scattering example to
illustrate the neural formulation on a non-star-shaped geometry. The
computational domain is the unit cube with one or more vertical
cylindrical sound-soft obstacles removed,
$\Omega=(0,1)^3\setminus\overline D .$ In the reported configuration,
$D$ is a staggered pair of cylinders, $D = D_1 \cup D_2 ,$ where
\begin{equation}
D_j
=
\{(x,y,z)\in(0,1)^3:
(x-c_{j,1})^2+(y-c_{j,2})^2<R_j^2\},
\qquad j=1,2 .
\end{equation}
We prescribe the incident plane wave
\begin{equation}
u_{\mathrm{inc}}(\vec x)
=
\exp\qp{ik d\cdot \vec x},
\qquad
d=\frac{(0.55,-1,0)}{\sqrt{0.55^2+1}} ,
\end{equation}
so that the incoming wave is oblique in the $xy$-plane. The total
field $u$ satisfies $-\Delta u-k^2u=0$ in $\Omega .$ On the
cylindrical obstacle boundary $\Gamma_{\mathrm{obs}}:=\partial D$ we
impose the sound-soft condition $u=0$ on $\Gamma_{\mathrm{obs}}.$ This
condition is imposed strongly in the neural ansatz.

The scattered field is defined by $u_s=u-u_{\mathrm{inc}}.$ On the
four vertical faces of the cube we impose the first-order absorbing
condition on the scattered field, $\partial_{\vec n}u_s-iku_s=0 .$ On
the top and bottom faces, $z=0$ and $z=1$, we impose the corresponding
axial consistency condition $\partial_z u = \partial_z
u_{\mathrm{inc}},$ which reduces to a homogeneous Neumann condition
for the incident direction as $d_3=0$.

The reported run uses the same neural residual least-squares structure
as in the two-dimensional circular
benchmark. Figure~\ref{fig:nn-3d-obstacle-scattering} shows two
cutaway visualisations of the computed field. The plots display the
three-dimensional extruded scattering pattern generated by the oblique
incident wave interacting with the cylindrical sound-soft obstacles.

\begin{figure}[htbp]
\centering
\begin{subfigure}[t]{0.4\textwidth}
\centering
\includegraphics[width=\textwidth]{./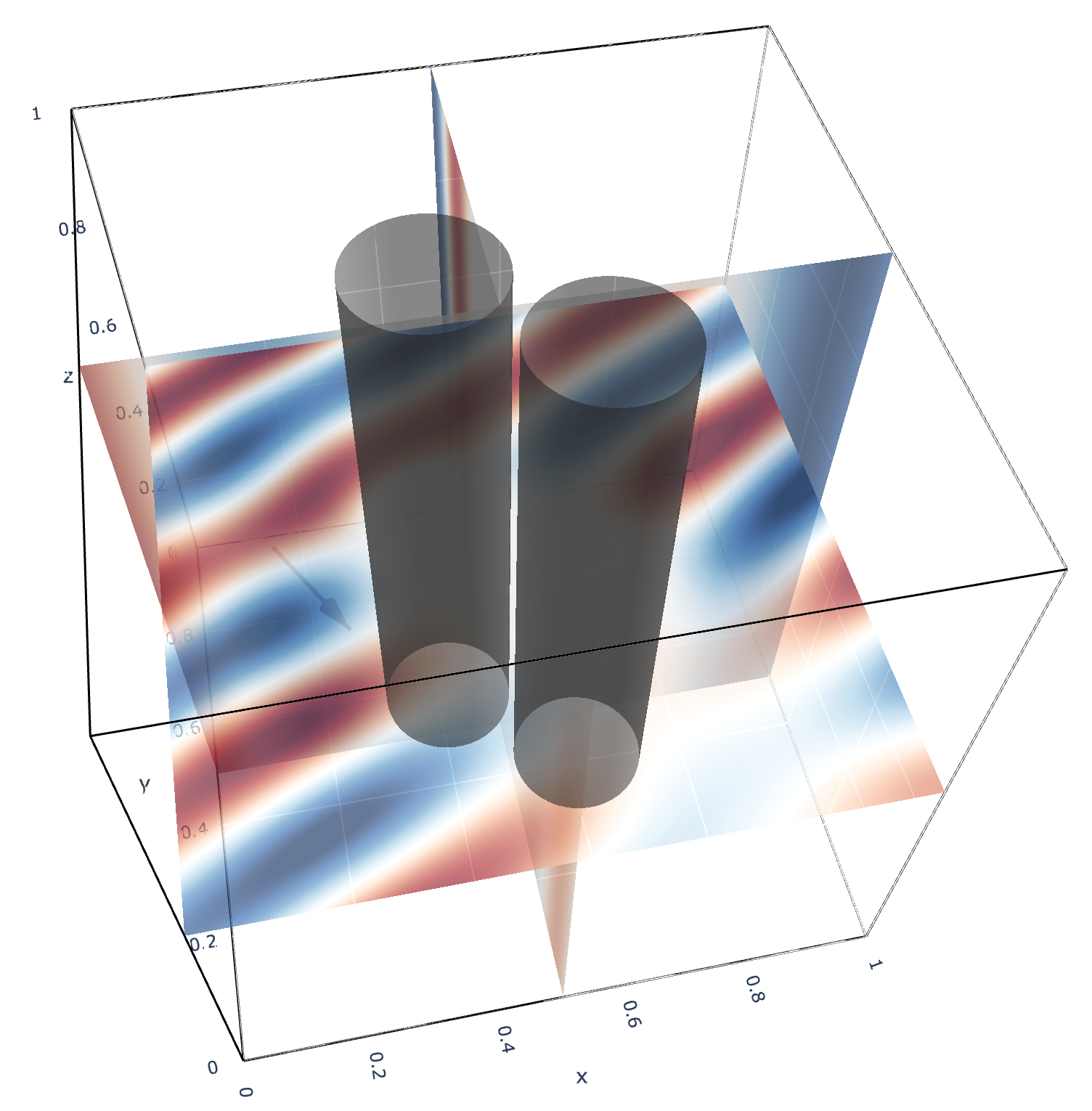}
\caption{Cutaway view of the computed three-dimensional field.}
\label{fig:nn-3d-obstacle-scattering-a}
\end{subfigure}
\hfill
\begin{subfigure}[t]{0.4\textwidth}
\centering
\includegraphics[width=\textwidth]{./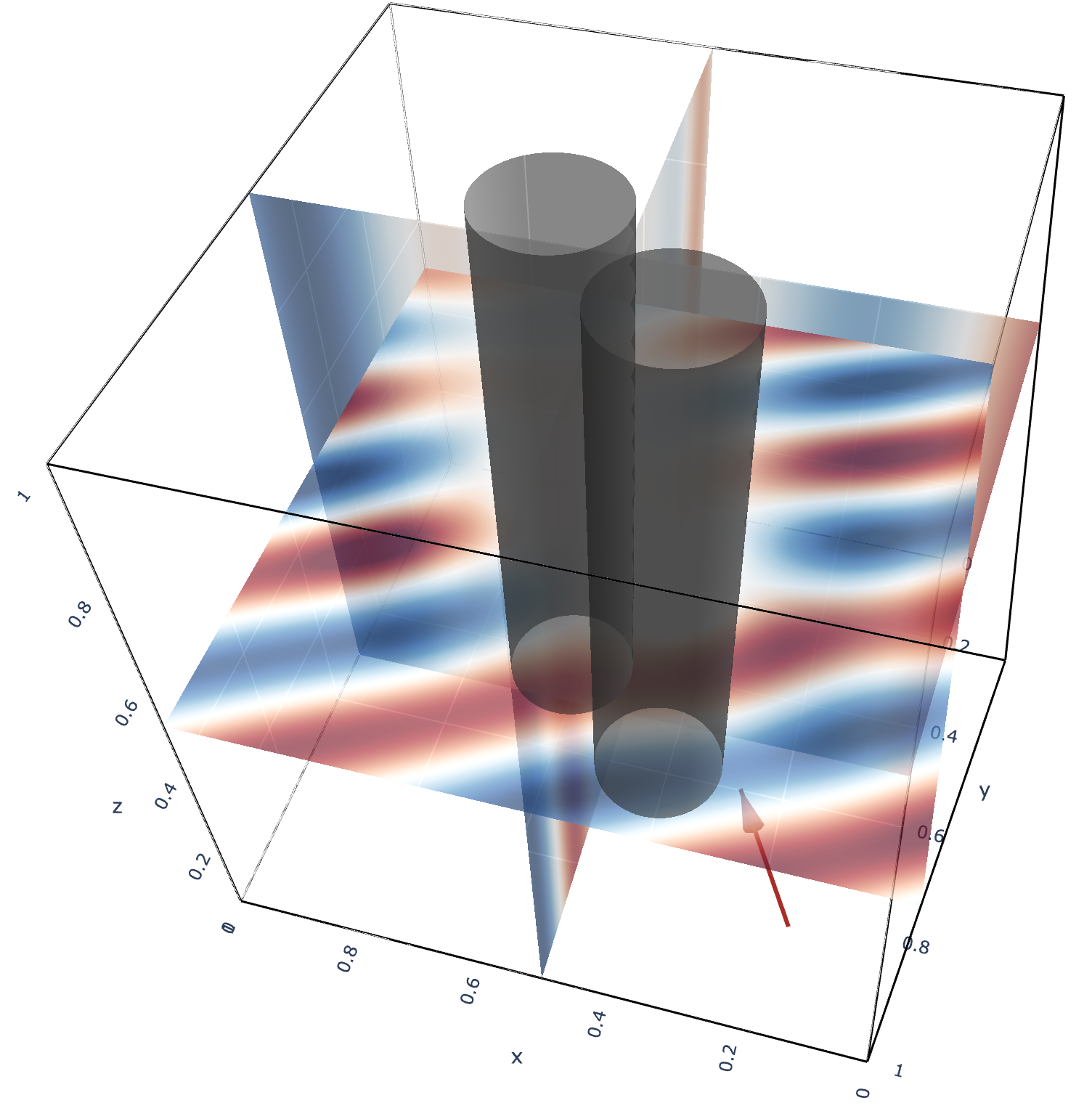}
\caption{Alternative cutaway view showing the interaction with the extruded obstacles.}
\label{fig:nn-3d-obstacle-scattering-b}
\end{subfigure}
\caption{Three-dimensional neural solution for the extruded cylindrical obstacle scattering benchmark with $k=30$. The computation is carried out in the unit cube with vertical sound-soft cylindrical obstacles removed. The sound-soft condition is imposed strongly through the neural ansatz, while the absorbing condition is imposed on the scattered field on the four vertical exterior faces. The arrow determines the incoming incident wave.}
\label{fig:nn-3d-obstacle-scattering}
\end{figure}

\printbibliography

\end{document}